\documentclass{elsart}
\usepackage{amsfonts}
\usepackage{makeidx}
\usepackage{graphicx}
\usepackage[english,activeacute]{babel}
\usepackage[latin1]{inputenc}
\usepackage{amsmath,amssymb,latexsym,comment}
\usepackage[numbers,sort&compress]{natbib}

\bibpunct{[}{]}{,}{n}{}{;}
\newtheorem{theorem}{Theorem}[section]

\newtheorem{lemma}{Lemma}[section]

\newtheorem{proposition}{Proposition}[section]
\newtheorem{remark}{Remark}[section]

\newenvironment{proof}[1][Proof]{\noindent\textbf{#1.} }{\ \rule{0.5em}{0.5em}}
\hyphenation{or-tho-go-nal po-ly-no-mials}

\begin{document}

\begin{frontmatter}

\title{Asymptotics for Laguerre--Sobolev type orthogonal polynomials modified within their oscillatory regime}

\author[C] {Edmundo J. Huertas \thanksref{E}}
\author[M] {Francisco Marcell\'an \thanksref{P}}
\author[M] {M. Francisca P\'erez-Valero  \thanksref{F}}
\author[V] {Yamilet Quintana \thanksref{Y}}

\address[C] {CMUC, Departamento de Matem\'atica, FCTUC, Universidade de Coimbra, Largo D. Dinis, Apdo. 3008, 3001-544 Coimbra, Portugal}

\address[M] {Departamento de Matem\'aticas, Universidad Carlos III, Legan\'es-Madrid, Spain}

\address[V] {Departamento de Matem\'aticas Puras y Aplicadas, Universidad Sim\'on Bolívar, Postal Code 89000, Caracas 1080 A, Venezuela}

\thanks[E] {Corresponding author. Partially supported by Dirección General de Investigación Científica y Técnica, Ministerio de Economía y Competitividad of Spain,
grant MTM 2012-36732-C03-01, and Fundac\~ao para a Ci\^encia e Tecnologia (FCT) of Portugal, ref. SFRH/BPD/91841/2012.}

\thanks[P] {Supported by Direcci\'on General de Investigaci\'on Cient\'ifica, Ministerio de Econom\'ia y Competitividad of Spain, grant MTM 2012-36732-C03-01.}

\thanks[F] {Supported by the Research Fellowship Program, Ministerio de Ciencia e Innovaci\'on (MTM 2009-12740-C03-01) and Direcci\'on General de Investigaci\'on Cient\'ifica, Ministerio de Econom\'ia y Competitividad of Spain, grant MTM 2012-36732-C03-01.}

\thanks[Y] {Partially supported by Direcci\'on General de Investigaci\'on Cient\'ifica, Ministerio de Econom\'ia y Competitividad of Spain, grant MTM 2012-36732-C03-01.}

\begin{abstract}
In this paper we consider sequences of polynomials orthogonal with respect
to the discrete Sobolev inner product
\begin{equation*}
\left\langle f,g\right\rangle_{S} =\int_{0}^{\infty }f(x)g(x)x^{\alpha
}e^{-x}dx+\mathbb{F}(c)A\mathbb{G}(c)^{t}, \, \, \, \alpha>-1,
\end{equation*}
where $f$ and $g$ are polynomials with real coefficients, $A \in \mathbb{R }%
^{(2,2)}$ and the vectors $\mathbb{F}(c),\,\mathbb{G}(c)$ are
\begin{equation*}
A=\left(%
\begin{array}{cc}
M & 0 \\
0 & N%
\end{array}%
\right),\quad \mathbb{F}(c)= (f(c), f^{\prime }(c)) \mbox{ and } \mathbb{G}%
(c)= (g(c), g^{\prime }(c)), \mbox{ respectively,}
\end{equation*}
with $M,N\in \mathbb{R}_{+}$ and the mass point $c$ is located inside the
oscillatory region for the classical Laguerre polynomials. We focus our
attention on the representation of these polynomials in terms of
classical Laguerre polynomials and we deduce analyze the behavior of the coefficients of the
corresponding five-term recurrence relation when the degree of the polynomials is large enough. Also, the outer relative asymptotics of the Laguerre-Sobolev type with respect to the Laguerre polynomials is analyzed.

\textrm{2010 AMS Subject Classification. Primary 33C45, 33C47. Secondary
42C05.}
\end{abstract}

\begin{keyword}
Orthogonal polynomials, discrete Sobolev polynomials, Laguerre polynomials, asymptotics
\end{keyword}

\end{frontmatter}


\section{Introduction}

\label{[Section-1]-Intro}


The study of asymptotic properties for general orthogonal polynomials is an
important challenge in approximation theory and their applications permeate
many fields in science and engineering \citep{N79,O99,ST92,Szego75}.
Although it may seem as an old subject from the point of view of standard
orthogonality \citep{Chi78,Szego75}, this is not the case neither in the
general setting (cf. \citep{L88,LMVA95,N79,R77,R82,R87,ST92}) nor from the
viewpoint of Sobolev orthogonality, where it remains like a partially
explored subject \citep{BMJL06}. In fact, in the last ten years this topic
has attracted the interest of many researchers
\citep{BFM02,DM10,DM11,DHM11,DHM12,FZ2010,FMP99,MM06,MPQU13,MQU13,MQU12,
MZFH12,PRT10,PQRT10,PQRT11}.

A Sobolev-type or discrete Sobolev-type inner product on the linear space $%
\mathbb{P}$ of polynomials with real coefficients is defined by
\begin{equation}
\left\langle f,g\right\rangle _{S}=\int f(x)g(x)d\mu _{0}(x)+\sum_{k=0}^{d}%
\mathbb{F}(c_{k})A_{k}\mathbb{G}(c_{k})^{t},\,d\in \mathbb{Z}_{+},
\label{[Sec1]-SobtyInnPr}
\end{equation}%
where $\mu _{0}$ is a nontrivial finite and positive Borel measure supported
on the real line, $f,g\in \mathbb{P}$, and for $k=0,\ldots ,d$, $d\in
\mathbb{Z}_{+}$, the matrices $A_{k}=(a_{ij}^{(k)})\in \mathbb{R}%
^{(1+N_{k})(1+N_{k})}$ are positive semi-definite. We denote by $\mathbb{F}%
(c_{k})$ and $\mathbb{G}(c_{k})$ the vectors $\mathbb{F}(c_{k})=\left(
f(c_{k}),f^{\prime }(c_{k}),\ldots ,f^{(N_{k})}(c_{k})\right) $ and $\mathbb{%
G}(c_{k})=\left( g(c_{k}),g^{\prime }(c_{k}),\ldots
,g^{(N_{k})}(c_{k})\right) $, respectively, with $c_{k}\in \mathbb{R}$, $%
N_{k}\in \mathbb{Z}_{+}$ and, as usual, $v^{t}$ denotes the transpose of the vector $v$%
. This notion was initially introduced in \citep{ELMMR95} for diagonal
matrices $A_{k}$ in order to study recurrence relations for sequences of
polynomials orthogonal with respect to (\ref{[Sec1]-SobtyInnPr}).

The study of asymptotic properties of the sequences of orthogonal
polynomials with respect to particular cases of the inner product (\ref%
{[Sec1]-SobtyInnPr}) has been done by considering separately the cases `mass points
inside' or `mass points outside' of $\mbox{{\em supp\/}}\mu _{0}$,
respectively, being $\mbox{{\em supp\/}}\mu _{0}$ a bounded interval of $%
\mathbb{R}$ or, more recently, an unbounded interval of the real line (see,
for instance \citep{DM10,DM11,DHM11,DHM12,FZ2010,MM06,MZFH12}). The first
results in the literature about asymptotic properties of orthogonal
polynomials with respect to a Sobolev-type inner product like (\ref%
{[Sec1]-SobtyInnPr}) appear in \citep{MVA93}, where the authors considered $%
d=0$, $N_{0}=1$, $a_{11}^{(0)}=a_{12}^{(0)}=a_{21}^{(0)}=0,$ $%
a_{22}^{(0)}=\lambda $, with $\lambda >0$. Therein, such asymptotic
properties when there is only one mass point supporting the derivatives
either inside or outside $[-1,1]$ and $\mu $ is a measure in the Nevai class
$M(0,1)$ are studied.

In \citep{LMVA95}, using an approach based on the theory of Padé
approximants, the authors obtain the outer relative asymptotics for
orthogonal polynomials with respect to the Sobolev-type inner product (\ref%
{[Sec1]-SobtyInnPr}) assuming that $\mu _{0}$ belongs to Nevai class $M(0,1)$
and the mass points $c_{k}$ belong to $\mathbb{C}\setminus
\mbox{{\em
supp\/}}\mu $. The same problem with the mass points in $\mbox{{\em supp\/}}%
\mu =[-1,1]$ was solved in \citep{MRS03}, provided that $\mu ^{\prime }(x)>0$
a.e. $x\in \lbrack -1,1]$ and $A_{k}$ being diagonal matrices with $%
a_{ii}^{(k)}$ non-negative constants. The pointwise convergence of the
Fourier series associated to such an inner product was studied when $\mu
_{0} $ is the Jacobi measure (see also \citep{MOR02,MOR022}). On the other hand, the
asymptotics for orthogonal polynomials with respect to the Sobolev-type
inner product (\ref{[Sec1]-SobtyInnPr}) with $\mu _{0}\in M(0,1)$, $c_{k}$
belong to $\mbox{{\em supp\/}}\mu \setminus \lbrack -1,1],$ and $A_{k}$ are
complex diagonal matrices such that $a_{1+N_{k},1+N_{k}}^{(k)}\not=0$, was
solved in \citep{AS05}.

Another results about the asymptotic behavior of orthogonal
polynomials associated with diagonal (resp. non-diagonal) Sobolev inner
products with respect to measures supported on the complex plane can be
found in \citep{AMRR95,BFM02,DM10,LP99}. On the other hand, results concerning
asymptotics for extremal polynomials associated to non-diagonal Sobolev
norms may be seen in \citep{LPP05,PRT10,PQRT10,PQRT11}.

In this paper we deal with sequences of polynomials orthogonal with respect
to a particular case of (\ref{[Sec1]-SobtyInnPr}). Indeed, $\mu _{0}$ is the
Laguerre classical measure%
\begin{equation}
\langle f,g\rangle _{S}=\int_{0}^{\infty }f(x)g(x)x^{\alpha }e^{-x}dx+%
\mathbb{F}(c)A\mathbb{G}(c)^{t}, \alpha >-1,  \label{[Sec1]-DicrLagSob}
\end{equation}%
$f,g\in \mathbb{P}$. The matrix $A$ and the vectors $\mathbb{F}(c),\,\mathbb{%
G}(c)$ are
\begin{equation*}
A=\left(
\begin{array}{cc}
M & 0 \\
0 & N%
\end{array}%
\right) ,\quad \mathbb{F}(c)=(f(c),f^{\prime }(c))\mbox{ and }\mathbb{G}%
(c)=(g(c),g^{\prime }(c)),\mbox{ respectively,}
\end{equation*}%
$M,N\in \mathbb{R}_{+}$, and the mass point $c$ is located inside the
oscillatory region for the classical Laguerre polynomials, i.e., $c>0$.
Following the methodology given in \citep{DM10,DM11,DHM11,DHM12,MM06,MZFH12}
we focus our attention on the representation of these polynomials in terms
of the classical Laguerre polynomials. Their asymptotic behavior will be
discussed.

More precisely, as it was mentioned above, recent works like %
\citep{DM10,DM11,DHM11,DHM12,MM06,MZFH12} have focused the attention on the
study of asymptotic properties of the sequences of orthogonal polynomials
with respect to specific cases of the inner product (\ref{[Sec1]-SobtyInnPr}%
) with `mass points outside' of $\mbox{{\em supp\/}}\mu _{0}$, being $%
\mbox{{\em supp\/}}\mu _{0}$ an unbounded interval of the real line.
However, to the best of our knowledge, asymptotic properties of the
sequences of orthogonal polynomials associated to (\ref{[Sec1]-DicrLagSob})
are not available in the literature.

The structure of the manuscript is as follows. Section \ref%
{[Section-2]-Background} contains the basic background about Laguerre
polynomials and some other auxiliary results which will be used throughout
the paper. In Section \ref{[Section-3]-OuterAs} we prove our main result,
namely the outer relative asymptotic of the Laguerre-Sobolev type orthogonal
polynomials modified into the \textit{positive} real semiaxis. Finally, in
Section \ref{[Section-4]-5TRR} we deduce the coefficients of the
corresponding five-term recurrence relation as well as their asymptotic
behavior when the degree of the polynomials is large enough.

Throughout this manuscript, the notation $u_{n}\sim v_{n}$ means that the
sequence $\{\frac{u_{n}}{v_{n}}\}_{n}$ converges to certain non zero
constant as $n\rightarrow \infty$. Any other standard
notation will be properly introduced whenever needed.


\section{Background and previous results}

\label{[Section-2]-Background}


Laguerre orthogonal polynomials are defined as the polynomials orthogonal
with respect to the inner product
\begin{equation}
\left\langle f,g\right\rangle _{\alpha }=\int_{0}^{\infty
}f(x)g(x)\,x^{\alpha }e^{-x}dx,\quad \alpha >-1,\quad f,g\in \mathbb{P}.
\label{[Sec2]-LagInnPr}
\end{equation}

The expression of these polynomials as an $\,_{1}F_{1}$ hypergeometric
function is very well known in the literature (see for instance, %
\citep{IsmBk05,NikUv88,Szego75}). The connection between these two facts
follows from a characterization of such orthogonal polynomials as
eigenfunctions of a second order linear differential operator with
polynomial coefficients. The following proposition will be useful in the
sequel and it summarizes some structural and asymptotic properties of
Laguerre polynomials involving two different normalizations %
\citep{Chi78,MBP94,Szego75}.

\begin{proposition}
\label{[Sec2]-PROP-21} Let $\{\widehat{L}_{n}^{\alpha }(x)\}_{n\geq 0}$ be
the sequence of monic Laguerre orthogonal polynomials. Then the following
statements hold.

\begin{enumerate}
\item Three-term recurrence relation. For every $n\geq 1$%
\begin{equation}
x\widehat{L}_{n}^{\alpha }(x)=\widehat{L}_{n+1}^{\alpha }(x)+\beta _{n}%
\widehat{L}_{n}^{\alpha }(x)+\gamma _{n}\widehat{L}_{n-1}^{\alpha }(x),
\label{[Sec2]-3TRRLag}
\end{equation}%
with initial conditions $\widehat{L}_{0}^{\alpha }(x)=1$, $\widehat{L}%
_{1}^{\alpha }(x)=x-(\alpha +1)$, and $\beta
_{n}=2n+\alpha +1$, $\gamma _{n}=n(n+\alpha )$.

\item For every $n\in \mathbb{N}$,%
\begin{equation}
||\widehat{L}_{n}^{\alpha }||_{\alpha }^{2}=\Gamma (n+1)\Gamma (n+\alpha +1).
\label{[Sec2]-LagNorm}
\end{equation}

\item Hahn's condition. For every $n\in \mathbb{N}$,%
\begin{equation}
\lbrack \widehat{L}_{n}^{\alpha }(x)]^{\prime }=n\widehat{L}_{n-1}^{\alpha
+1}(x).  \label{[Sec2]-LagDer}
\end{equation}

\item The $n$-th Dirichlet kernel $K_{n}(x,y)$, given by%
\begin{equation}
K_{n}(x,y)=\sum_{k=0}^{n}\frac{\widehat{L}_{k}^{\alpha }(x)\widehat{L}%
_{k}^{\alpha }(y)}{||\widehat{L}_{k}^{\alpha }||_{\alpha }^{2}},
\label{[Sec2]-KLagxy}
\end{equation}%
satisfies the Christoffel-Darboux formula (cf. \citep[Theorem 3.2.2]{Szego75}%
):
\begin{equation}
K_{n}(x,y)=\frac{1}{||\widehat{L}_{n}^{\alpha }||_{\alpha }^{2}}\left( \frac{%
\widehat{L}_{n+1}^{\alpha }(x)\widehat{L}_{n}^{\alpha }(y)-\widehat{L}%
_{n}^{\alpha }(x)\widehat{L}_{n+1}^{\alpha }(y)}{(x-y)}\right) ,\quad n\geq
0.  \label{[Sec2]-CrDarbK}
\end{equation}

\item The so called confluent form of the above kernel is given by%
\begin{equation}
K_{n}(x,x)=\frac{1}{||\widehat{L}_{n}^{\alpha }||_{\alpha }^{2}}\left\{ [%
\widehat{L}_{n+1}^{\alpha }]^{\prime }(x)\widehat{L}_{n}^{\alpha }(x)-[%
\widehat{L}_{n}^{\alpha }]^{\prime }(x)\widehat{L}_{n+1}^{\alpha
}(x)\right\} ,\quad n\geq 0.  \label{[Sec2]-KLagxx}
\end{equation}

\item Let $\{L_{n}^{(\alpha )}(x)\}_{n\geq 0}$ be the sequence of Laguerre
orthogonal polynomials with leading coefficient $\frac{\left( -1\right) ^{n}%
}{n!}$, then
\begin{equation}
L_{n}^{(\alpha )}\left( x\right) =\frac{\left( -1\right)^{n}}{n!}\widehat{L}%
_{n}^{\alpha }(x).  \label{[Sec2]-(-1)nL}
\end{equation}

\item \citep[Theorem 8.22.3]{Szego75} Outer strong asymptotics or Perron
asymptotics formula on $\mathbb{C}\setminus \mathbb{R}_{+}$. Let $\alpha \in
\mathbb{R}$, then%
\begin{eqnarray}
L_{n}^{(\alpha )}\left( x\right) &=&\frac{1}{2}\pi ^{-1/2}e^{x/2}\left(
-x\right) ^{-\alpha /2-1/4}n^{\alpha /2-1/4}\exp \left( 2\left( -nx\right)
^{1/2}\right)  \label{[Sec2]-Perron} \\
&&\quad \hspace{20pt}\hspace{50pt}\times \left\{
\sum\limits_{k=0}^{p-1}C_{k}(\alpha ;x)n^{-k/2}+\mathcal{O}%
(n^{-p/2})\right\} .  \notag
\end{eqnarray}

Here $C_{k}(\alpha ;x)$ is independent of $n$. This relation holds for $x$
in the complex plane with a cut along the positive real semiaxis, and it
also holds if $x$\ is in the cut plane mentioned. $\left( -x\right)
^{-\alpha /2-1/4}$ and $\left( -x\right) ^{1/2}$ must be taken real and
positive if $x<0$. The bound for the remainder holds uniformly in every
compact subset of the complex plane with empty intersection with $\mathbb{R}%
_{+}$.

\item \citep[Theorem 8.22.2]{Szego75} Perron generalization of Fejér formula
on $\mathbb{R}_{+}$. Let $\alpha \in \mathbb{R}$. Then for $x>0$ we have
\begin{equation}
\begin{array}{rr}
L_{n}^{(\alpha )}\left( x\right) & =\pi ^{-1/2}e^{x/2}x^{-\alpha
/2-1/4}n^{\alpha /2-1/4}\cos \{2\left( nx\right) ^{1/2}-\alpha \pi /2-\pi
/4\} \\
& \cdot \left\{ \sum\limits_{k=0}^{p-1}A_{k}(x)n^{-k/2}+\mathcal{O}%
(n^{-p/2})\right\} \\
& +\pi ^{-1/2}e^{x/2}x^{-\alpha /2-1/4}n^{\alpha /2-1/4}\sin \{2\left(
nx\right) ^{1/2}-\alpha \pi /2-\pi /4\} \\
& \cdot \left\{ \sum\limits_{k=0}^{p-1}B_{k}(x)n^{-k/2}+\mathcal{O}%
(n^{-p/2})\right\} ,%
\end{array}
\label{[Sec2]-Fejer}
\end{equation}%
where $A_{k}(x)$ and $B_{k}(x)$ are certain functions of $x$\ independent of
$n$ and regular for $x>0$. The bound for the remainder holds uniformly in $%
[\epsilon ,\omega ]$. For $k=0$ we have $A_{0}(x)=1$ and $B_{0}(x)=0$.
\end{enumerate}
\end{proposition}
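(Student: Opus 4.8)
The plan is to treat the eight assertions in two groups: items (1)--(6) are elementary consequences of the explicit representation of the Laguerre polynomials together with the general structure theory of orthogonal polynomial sequences, whereas items (7) and (8) are the strong-asymptotic formulas of Perron, quoted verbatim from the monograph by Szeg\H{o}, and require no independent argument.

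I would begin from the closed form of the monic Laguerre polynomials, namely the Rodrigues representation
\[
\widehat{L}_{n}^{\alpha }(x)=(-1)^{n}\,x^{-\alpha }e^{x}\,\frac{d^{n}}{dx^{n}}\left( x^{n+\alpha }e^{-x}\right) ,
\]
or equivalently their $\,_{1}F_{1}$ hypergeometric expansion. Comparing leading coefficients yields item (6) at once, since $\widehat{L}_{n}^{\alpha }$ is monic while $L_{n}^{(\alpha )}$ has leading coefficient $(-1)^{n}/n!$. For the three-term recurrence (\ref{[Sec2]-3TRRLag}) I would use the fact that any monic orthogonal sequence obeys a recurrence of exactly this shape; the explicit values $\beta _{n}=2n+\alpha +1$ and $\gamma _{n}=n(n+\alpha )$ then follow by matching the coefficients of $x^{n}$ and of $x^{n-1}$ on both sides of (\ref{[Sec2]-3TRRLag}), reading the first few subleading coefficients of $\widehat{L}_{n}^{\alpha }$ off the explicit expansion. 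The norm formula (\ref{[Sec2]-LagNorm}) is then obtained by telescoping: since $\gamma _{n}=||\widehat{L}_{n}^{\alpha }||_{\alpha }^{2}/||\widehat{L}_{n-1}^{\alpha }||_{\alpha }^{2}=n(n+\alpha )$ and $||\widehat{L}_{0}^{\alpha }||_{\alpha }^{2}=\int_{0}^{\infty }x^{\alpha }e^{-x}dx=\Gamma (\alpha +1)$, an immediate induction gives $||\widehat{L}_{n}^{\alpha }||_{\alpha }^{2}=\Gamma (n+1)\Gamma (n+\alpha +1)$.

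For the Hahn differentiation rule (\ref{[Sec2]-LagDer}) I would check that $[\widehat{L}_{n}^{\alpha }]^{\prime }/n$ is monic of degree $n-1$ and orthogonal to every polynomial of degree below $n-1$ with respect to the weight $x^{\alpha +1}e^{-x}$; integrating by parts moves the derivative off $\widehat{L}_{n}^{\alpha }$ and converts the pairing against $x^{\alpha +1}e^{-x}$ into a pairing of $\widehat{L}_{n}^{\alpha }$ against a polynomial of degree at most $n-1$ with respect to $x^{\alpha }e^{-x}$, which vanishes by orthogonality. By uniqueness of the monic orthogonal family this forces $[\widehat{L}_{n}^{\alpha }]^{\prime }=n\widehat{L}_{n-1}^{\alpha +1}$. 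The Christoffel--Darboux identity (\ref{[Sec2]-CrDarbK}) is the usual purely algebraic consequence of (\ref{[Sec2]-3TRRLag}): writing the recurrence at $x$ and at $y$, cross-multiplying by $\widehat{L}_{k}^{\alpha }(y)$ and $\widehat{L}_{k}^{\alpha }(x)$ respectively, subtracting, dividing by $(x-y)$ and summing over $k$ produces a telescoping sum that collapses to the stated right-hand side. The confluent form (\ref{[Sec2]-KLagxx}) follows by letting $y\to x$ through a first-order Taylor expansion of the numerator.

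Since items (7) and (8) are cited directly, the only genuine computation lies in the coefficient and moment evaluations behind items (1)--(3), and none of these presents a conceptual obstacle. The one point that demands care is bookkeeping consistency between the two normalizations: the structural relations (\ref{[Sec2]-3TRRLag})--(\ref{[Sec2]-KLagxx}) are phrased for the monic family $\widehat{L}_{n}^{\alpha }$, while the Perron asymptotics (\ref{[Sec2]-Perron}) and (\ref{[Sec2]-Fejer}) are phrased for $L_{n}^{(\alpha )}$, so the factor $(-1)^{n}/n!$ from item (6) must be tracked consistently to make the whole collection internally coherent and ready for use in the later sections.
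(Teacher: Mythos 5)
Your proposal is correct, but there is nothing in the paper to compare it against: Proposition \ref{[Sec2]-PROP-21} is stated as background and carries no proof at all, the authors simply compiling these classical facts from Chihara, Marcell\'an--Branquinho--Petronilho and Szeg\H{o}, with items (7) and (8) quoted verbatim as Theorems 8.22.3 and 8.22.2 of Szeg\H{o}'s book. Your derivations of items (1)--(6) are the standard ones and all the steps check: the leading-coefficient comparison for (\ref{[Sec2]-(-1)nL}); the general three-term recurrence for monic orthogonal sequences with $\beta_n,\gamma_n$ read off from the subleading coefficients; the telescoping $\|\widehat{L}_{n}^{\alpha}\|_{\alpha}^{2}=\gamma_{n}\|\widehat{L}_{n-1}^{\alpha}\|_{\alpha}^{2}$ down to $\|\widehat{L}_{0}^{\alpha}\|_{\alpha}^{2}=\Gamma(\alpha+1)$; the integration-by-parts argument for Hahn's condition (where the vanishing of the boundary terms at the origin is exactly where the hypothesis $\alpha>-1$ enters, a point worth making explicit); and the usual telescoping proof of Christoffel--Darboux with its confluent limit. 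Your closing remark about tracking the normalization factor $(-1)^{n}/n!$ between the monic identities and the Perron/Fej\'er asymptotics is apt, since that bookkeeping is precisely what the paper relies on in the proof of Lemma \ref{[Sec2]-LEMMA-21}. In short, you have supplied proofs where the paper supplies only citations; either treatment is legitimate for material of this kind.
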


Next, we summarize some results about the so called $k$--iterated
Christoffel perturbed Laguerre orthogonal polynomials. They are orthogonal
with respect to the inner product%
\begin{equation}
\langle f,g\rangle _{\lbrack k]}=\int_{0}^{\infty
}f(x)g(x)(x-c)^{k}x^{\alpha }e^{-x}dx,\quad \alpha >-1,\,\,f,g\in \mathbb{P},
\label{[Sec2]-kChrisLagInn}
\end{equation}%
and we will denote by $\{\widehat{L}_{n}^{\alpha ,[k]}(x)\}_{n\geq 0}$ the
corresponding monic sequence and by%
\begin{equation*}
||\widehat{L}_{n}^{\alpha ,[k]}||_{[k]}^{2}=\langle \widehat{L}_{n}^{\alpha
,[k]}(x),x^{n}\rangle _{\lbrack k]}
\end{equation*}%
the norm of the $n$--th degree polynomial in the sequence. Note that the
modified Laguerre measure $(x-c)^{k}x^{\alpha }e^{-x}dx$ is positive
definite when either $k$ is an even integer or $k$ is an odd number and $c$ is
outside the interval $[0,+\infty )$. It is very well known that, when $%
k=1$ and $c$ is \textit{outside} the support of the classical Laguerre
measure, i.e., when it is assumed that $c$ is not a zero of $\widehat{L}%
_{n}^{\alpha }(x)$, these polynomials are actually the monic Laguerre
Kernels (\ref{[Sec2]-CrDarbK}) (see \citep[Sec. I.7]{Chi78}).

We introduce the following standard notation for the partial derivatives of the
$n$-th Dirichlet kernel $K_{n}(x,y)$%
\begin{equation*}
\frac{\partial ^{j+k}K_{n}(x,y)}{\partial x^{j}\partial y^{k}}%
=K_{n}^{(j,k)}(x,y),\quad 0\leq i,j\leq n.
\end{equation*}%
Taking derivatives with respect to $y$ in (\ref{[Sec2]-KLagxy}) and considering $x=y=c$ we get%
\begin{equation}
K_{n-1}^{(0,1)}(c,c)=\frac{1}{2}\frac{\widehat{L}_{n-1}^{\alpha }(c)[%
\widehat{L}_{n}^{\alpha }]^{\prime \prime }(c)-\widehat{L}_{n}^{\alpha }(c)[%
\widehat{L}_{n-1}^{\alpha }]^{\prime \prime }(c)}{\Gamma (n)\Gamma (n+\alpha
)}.  \label{[Sec2]-K01cc}
\end{equation}

On the other hand,%
\begin{equation*}
K_{n-1}^{(1,1)}(c,c)=\frac{1}{3!}\frac{1}{\Gamma (n)\Gamma (n+\alpha )}\cdot
\end{equation*}%
\begin{equation}
\left\{ \widehat{L}_{n-1}^{\alpha }(c)[\widehat{L}_{n}^{\alpha }]^{\prime
\prime \prime }(c)+3[\widehat{L}_{n-1}^{\alpha }]^{\prime }(c)[\widehat{L}%
_{n}^{\alpha }]^{\prime \prime }(c)-\widehat{L}_{n}^{\alpha }(c)[\widehat{L}%
_{n-1}^{\alpha }]^{\prime \prime \prime }(c)-3[\widehat{L}_{n}^{\alpha
}]^{\prime }(c)[\widehat{L}_{n-1}^{\alpha }]^{\prime \prime }(c)\right\} .
\label{[Sec2]-K11cc}
\end{equation}

\begin{remark}
\label{Remark21}The local character of the Taylor expansions means (\ref%
{[Sec2]-K01cc}) and (\ref{[Sec2]-K11cc}) hold for every $c\in \mathbb{R}$.
However, we are only interested in the case $c>0$ in order to study the
asymptotic behavior of sequences of polynomials orthogonal with respect to
the Sobolev-type inner product (\ref{[Sec1]-DicrLagSob}).
\end{remark}

The first technical step required for the proof of our main result is the
following lemma, concerning the asymptotic behavior as $n\rightarrow \infty $
of the above Laguerre kernels at $x=c$, $c\in \mathbb{R}_{+}$, that is,
within the oscillatory regime of the classical Laguerre orthogonal
polynomials.

\begin{lemma}
\label{[Sec2]-LEMMA-21} For every $c>0$, we have%
\begin{eqnarray*}
K_{n-1}(c,c) &\sim & \pi ^{-1}e^{c}c^{-\frac{1}{2}-\alpha }\,n^{1/2}, \\
K_{n-1}^{(0,1)}(c,c) &\sim &\pi ^{-1}e^{c}c^{-\frac{1}{2}-\alpha
}\,n^{1/2}, \\
K_{n-1}^{(1,1)}(c,c) &\sim &\frac{1}{3}\pi ^{-1}e^{c}c^{-\frac{3}{2}-\alpha
}\,n^{3/2}.
\end{eqnarray*}
\end{lemma}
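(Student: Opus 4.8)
The plan is to compute each kernel from its \emph{closed form} rather than from its defining sum, because the Christoffel--Darboux structure of the closed forms makes the cancellations that govern the leading order explicit. I start from the confluent kernel (\ref{[Sec2]-KLagxx}), taken at index $n-1$, for $K_{n-1}(c,c)$, and from (\ref{[Sec2]-K01cc}) and (\ref{[Sec2]-K11cc}) for the two mixed kernels. In every case I first eliminate all derivatives by iterating Hahn's condition (\ref{[Sec2]-LagDer}), so that $[\widehat{L}_{m}^{\alpha}]^{(j)}(c)=\frac{m!}{(m-j)!}\,\widehat{L}_{m-j}^{\alpha+j}(c)$; I then pass to the classical normalization through (\ref{[Sec2]-(-1)nL}) and evaluate the norms by (\ref{[Sec2]-LagNorm}). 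The factorial and Gamma prefactors collapse, and using $\Gamma(n+1)/\Gamma(n+\alpha)\sim n^{1-\alpha}$ each kernel is reduced to an explicit power of $n$ multiplying a \emph{difference of two products} $L_{n-i}^{(\alpha+j)}(c)\,L_{n-i'}^{(\alpha+j')}(c)$ of classical Laguerre polynomials taken at indices $\approx n$.

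Next I insert the Perron--Fej\'er formula (\ref{[Sec2]-Fejer}) with $A_{0}=1$, $B_{0}=0$. Writing $\psi_{m}^{(\beta)}=2(mc)^{1/2}-\beta\pi/2-\pi/4$ for the phase, the parameter shifts produced by Hahn's condition become pure phase shifts, $\psi_{m}^{(\alpha+j)}=\psi_{m}^{(\alpha)}-j\pi/2$, so every factor can be rewritten with the single parameter $\alpha$ at the price of replacing cosines by $\pm\cos$ or $\pm\sin$. After applying the product-to-sum identities, each product splits into a high-frequency part (phase $\approx 2\psi_{n}^{(\alpha)}$) and a slowly varying part (phase $\psi_{n-i}^{(\alpha)}-\psi_{n-i'}^{(\alpha)}$). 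Since the two products in each difference sit at indices differing only by $O(1)$, their high-frequency parts agree up to $O(n^{-3/2})$ and cancel, and the surviving term comes from the slowly varying phases, which I expand by $(n-i)^{1/2}-(n-i')^{1/2}=\tfrac{i'-i}{2}\,n^{-1/2}+O(n^{-3/2})$. This expansion furnishes the extra factors $c^{1/2}$ and $n^{-1/2}$ (or their squares in the second-derivative case) that, together with the prefactor, give the powers $n^{1/2}$ for $K_{n-1}(c,c)$ and $K_{n-1}^{(0,1)}(c,c)$ and $n^{3/2}$ for $K_{n-1}^{(1,1)}(c,c)$; collecting the $c$- and $n$-powers reproduces the stated constants, the factor $\tfrac13$ in the last one coming from the $\tfrac{1}{3!}$ and the coefficients $3$ in (\ref{[Sec2]-K11cc}).

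The delicate point, and the main obstacle, is this cancellation. Taken individually each product exceeds the stated order by a factor $n^{1/2}$ --- and by a factor $n$ in the case of $K_{n-1}^{(0,1)}(c,c)$, whose two factors are both cosines so that each product carries a large constant part $\approx\pm\tfrac12$ of its amplitude --- so the leading oscillatory terms, together with those large constant parts, must cancel in the difference before the true leading order surfaces. One must then verify that the relative $O(n^{-1/2})$ corrections of the Fej\'er series do not re-enter at the critical order; they do not, because they appear in both products with identical $c$- and $\alpha$-dependent coefficients $A_{1}(c),B_{1}(c)$ and equal leading amplitudes, so their slowly varying parts also cancel, leaving only the quadratic-in-phase contribution isolated above. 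This is exactly why the closed forms are needed rather than the defining sums: the sums do not telescope, and for the mixed kernel $K_{n-1}^{(0,1)}(c,c)$ --- which is not a sum of squares --- a term-by-term estimate leaves an oscillatory sum $\sum_{k}\sin 2\psi_{k}^{(\alpha)}$ of the same order $n^{1/2}$ as the answer, and hence cannot by itself yield a clean asymptotic.

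As a consistency check, the two diagonal kernels $K_{n-1}(c,c)$ and $K_{n-1}^{(1,1)}(c,c)$ are sums of squares, hence strictly positive, which agrees with the non-oscillating leading terms found above and confirms that no oscillation of order $n^{1/2}$ (resp. $n^{3/2}$) survives. For these two one may in fact bypass the closed form and argue directly from the positive sum, where the constant part of $\cos^{2}\psi_{k}^{(\alpha)}$ already produces the leading term and the remaining oscillatory sum is of strictly lower order; assembling the three computations then gives the claimed equivalences.
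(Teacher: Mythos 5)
Your proposal follows essentially the same route as the paper's proof: start from the confluent Christoffel--Darboux form and the closed expressions (\ref{[Sec2]-K01cc})--(\ref{[Sec2]-K11cc}), use Hahn's condition to trade derivatives for parameter shifts, insert the Perron--Fej\'er oscillatory asymptotics with $A_0=1$, $B_0=0$, and apply product-to-sum identities so that the leading oscillatory parts of the two products cancel and the surviving term is extracted from $\sqrt{n-i}-\sqrt{n-i'}=\tfrac{i'-i}{2}n^{-1/2}+O(n^{-3/2})$, with Stirling handling the Gamma ratio. Your two additions --- the explicit verification that the $O(n^{-1/2})$ corrections of the Fej\'er series cancel at the critical order (a point the paper's proof leaves implicit behind the factor $(1+\mathcal{O}(n^{-1/2}))$), and the positive-sum consistency check for the diagonal kernels --- are welcome refinements but do not change the method.
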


\begin{proof}
Taking $p=1$ in (\ref{[Sec2]-Fejer}), we have $A_{0}(x)=1$ and $B_{0}(x)=0$.
Thus, we obtain the behavior of $\widehat{L}_{n}^{(\alpha )}(x)$ for $n$
large enough, when $x\in \mathbb{R}_{+}$
\begin{equation*}
\widehat{L}_{n}^{\alpha }(x)=(-1)^{n}\Gamma (n+1)\pi
^{-1/2}e^{x/2}x^{-\alpha /2-1/4}n^{\alpha /2-1/4}
\end{equation*}
\begin{equation*}
\cdot \cos \{2\left( nx\right) ^{1/2}-\alpha \pi /2-\pi /4\}\cdot (1+%
\mathcal{O}(n^{-1/2})).
\end{equation*}%
We can rewrite the above expression as%
\begin{equation}
\widehat{L}_{n}^{\alpha }(x)=(-1)^{n}\Gamma (n+1)n^{\frac{\alpha }{2}-\frac{1%
}{4}}\sigma ^{\alpha }(x)\cos \varphi _{n}^{\alpha }(x)(1+\mathcal{O}%
(n^{-1/2}))  \label{[Sec2]-Fejer-peq1}
\end{equation}%
where%
\begin{equation*}
\varphi _{n}^{\alpha }(x)=2(nx)^{1/2}-\frac{\alpha \pi }{2}-\frac{\pi }{4},
\end{equation*}%
and%
\begin{equation}
\sigma ^{\alpha }(x)=\pi ^{-1/2}e^{x/2}x^{-\alpha /2-1/4}
\label{[Sec2]-sigma}
\end{equation}%
being a function independent of $n$. Combining (\ref{[Sec2]-LagDer}) with (%
\ref{[Sec2]-Fejer-peq1}), we get
\begin{equation*}
K_{n-1}(c,c)\sim \frac{\Gamma (n+1)}{\Gamma (n+\alpha )}n^{\alpha }\Theta
_{n}(c;{\alpha }),
\end{equation*}%
where%
\begin{equation}
\Theta _{n}(c;{\alpha })=\sigma ^{\alpha }(c)\sigma ^{\alpha +1}(c)\left[
\cos \varphi _{n-1}^{\alpha +1}(c)\cos \varphi _{n-1}^{\alpha }(c)-\cos
\varphi _{n-2}^{\alpha +1}(c)\cos \varphi _{n}^{\alpha }(c)\right] .
\label{[Sec2]-THETA-0}
\end{equation}%
Let us examine the above expression. Using the trigonometric identity%
\begin{equation*}
\cos (a)\cos (b)=\frac{\cos (a+b)+\cos (a-b)}{2},
\end{equation*}
we have
\begin{align}
\frac{\Theta _{n}(c;{\alpha })}{\sigma ^{\alpha }(c)\sigma ^{\alpha +1}(c)}&
=\frac{1}{2}\cos \left( 4\sqrt{c\left( n-1\right) }-\pi \alpha -\pi \right)   \label{[Sec2]-THETA-1} \\
& -\frac{1}{2}\cos \left( 2\sqrt{nc}-\pi \alpha -\pi +2\sqrt{c\left(
n-2\right) }\right)   \notag \\
& -\frac{1}{2}\cos \left( 2\sqrt{c\left( n-2\right) }-2\sqrt{nc}-\frac{\pi }{%
2}\right) .  \notag
\end{align}%
The last term on the right hand side is
\begin{equation*}
\frac{-1}{2}\cos \left( 2\sqrt{(n-2)c}-2\sqrt{nc}-\frac{\pi }{2}\right) =%
\frac{1}{2}\sin \left( 2\sqrt{nc}-2\sqrt{(n-2)c}\right) ,
\end{equation*}%
which behaves with $n$ as follows%
\begin{align*}
& \lim_{n\rightarrow \infty }\frac{\sqrt{n}}{2}\sin \left( 2\sqrt{nc}-2\sqrt{%
(n-2)c}\right)  \\
& =\lim_{n\rightarrow \infty }\frac{\sqrt{n}}{2}\frac{\sin \left( 2\sqrt{nc}%
-2\sqrt{(n-2)c}\right) }{2\sqrt{nc}-2\sqrt{(n-2)c}}(2\sqrt{nc}-2\sqrt{(n-2)c}%
)=\sqrt{c},
\end{align*}%
and, therefore,%
\begin{equation}
\frac{1}{2}\sin \left( 2\sqrt{(n-2)c}-2\sqrt{nc}\right) \sim \sqrt{\frac{c}{n%
}}.  \label{[Sec2]-THETA-2}
\end{equation}%
Next we study%
\begin{equation}
\frac{1}{2}\cos \left( 4\sqrt{c\left( n-1\right) }-\pi \alpha -\pi \right) -%
\frac{1}{2}\cos \left( 2\sqrt{nc}-\pi \alpha -\pi +2\sqrt{c\left( n-2\right)
}\right)   \label{[Sec2]-THETA-3}
\end{equation}%
in (\ref{[Sec2]-THETA-1}). Using%
\begin{equation*}
\cos a-\cos b=-2\sin \left( \frac{a+b}{2}\right) \sin \left( \frac{a-b}{2}%
\right) ,
\end{equation*}%
we get that (\ref{[Sec2]-THETA-3}) becomes%
\begin{eqnarray*}
&&-\sin \left( \sqrt{cn}-\pi \alpha -\pi +2\sqrt{c\left( n-1\right) }+\sqrt{%
c\left( n-2\right) }\right)  \\
&&\qquad \qquad \qquad \qquad \cdot \sin \left( 2\sqrt{c\left( n-1\right) }-%
\sqrt{cn}-\sqrt{c\left( n-2\right) }\right)
\end{eqnarray*}%
where the first factor is bounded, and the second verifies%
\begin{equation}
\lim_{n\rightarrow \infty }\sqrt{n}\sin \left( 2\sqrt{c\left( n-1\right) }-%
\sqrt{cn}-\sqrt{c\left( n-2\right) }\right) =0.  \label{[Sec2]-THETA-4}
\end{equation}%
From (\ref{[Sec2]-THETA-2}) and (\ref{[Sec2]-THETA-4}), we conclude%
\begin{equation*}
\Theta _{n}(c;{\alpha })\sim \pi ^{-1}e^{c}c^{-\frac{1%
}{2}-\alpha}\,n^{-1/2}.
\end{equation*}
On the other hand, from the\ Stirling's formula for the Gamma function, we
deduce%
\begin{equation}
\frac{\Gamma (n+1)}{\Gamma (n+\alpha )}\sim n^{1-\alpha },
\label{[Sec2]-CocGammas}
\end{equation}%
under the above assumptions we get%
\begin{equation*}
K_{n-1}(c,c)\sim \pi ^{-1}e^{c}c^{-\frac{1}{2}-\alpha }\,n^{1/2},\quad c\in \mathbb{R}_{+}.
\end{equation*}%
\ Next, we can proceed as above and we obtain the asymptotic behavior given
in (\ref{[Sec2]-K01cc}). For $n$ large enough, we get%
\begin{equation}
K_{n-1}^{(0,1)}(c,c)\sim \frac{1}{2}\frac{\Gamma (n+1)}{\Gamma (n+\alpha )}%
n^{\alpha +\frac{1}{2}}\Psi_{n} (c;\alpha ),
\label{[Sec2]-K01cc-A}
\end{equation}%
where%
\begin{equation*}
\Psi_{n} (c;\alpha )=\sigma ^{\alpha }(c)\sigma ^{\alpha +2}(c)\left[ \cos
\varphi _{n}^{\alpha }(c)\cos \varphi _{n-3}^{\alpha +2}(c)-\cos \varphi
_{n-1}^{\alpha }(c)\cos \varphi _{n-2}^{\alpha +2}(c)\right] .
\end{equation*}%
The expression in square brackets can be rewritten as%
\begin{eqnarray*}
&&-\sin \left( \sqrt{cn}-\pi \alpha -\frac{3}{2}\pi +\sqrt{c\left(
n-1\right) }+\sqrt{c\left( n-2\right) }+\sqrt{c\left( n-3\right) }\right)  \\
&&\qquad \qquad \cdot \sin \left( \sqrt{cn}-\sqrt{c\left( n-1\right) }-\sqrt{%
c\left( n-2\right) }+\sqrt{c\left( n-3\right) }\right)  \\
&&-\sin \left( \pi +\sqrt{cn}+\sqrt{c\left( n-1\right) }-\sqrt{c\left(
n-2\right) }-\sqrt{c\left( n-3\right) }\right)  \\
&&\qquad \qquad \cdot \sin \left( \sqrt{cn}-\sqrt{c\left( n-1\right) }+\sqrt{%
c\left( n-2\right) }-\sqrt{c\left( n-3\right) }\right) ,
\end{eqnarray*}%
where%
\begin{equation*}
\begin{array}{c}
\lim_{n\rightarrow \infty }\left[ -n\sin \left( \sqrt{cn}-\pi \alpha -\frac{3%
}{2}\pi +\sqrt{c\left( n-1\right) }+\sqrt{c\left( n-2\right) }+\sqrt{c\left(
n-3\right) }\right) \right.  \\
\left. \cdot \sin \left( \sqrt{cn}-\sqrt{c\left( n-1\right) }-\sqrt{c\left(
n-2\right) }+\sqrt{c\left( n-3\right) }\right) \right] =0,%
\end{array}%
\end{equation*}%
and%
\begin{equation*}
\begin{array}{c}
\lim_{n\rightarrow \infty }\left[ -n\sin \left( \sqrt{cn}-\sqrt{c\left(
n-1\right) }+\sqrt{c\left( n-2\right) }-\sqrt{c\left( n-3\right) }\right)
\right.  \\
\left. \cdot \sin \left( \pi +\sqrt{cn}+\sqrt{c\left( n-1\right) }-\sqrt{%
c\left( n-2\right) }-\sqrt{c\left( n-3\right) }\right) \right] =2c.%
\end{array}%
\end{equation*}%
As a consequence, taking into account (\ref{[Sec2]-sigma}), we get%
\begin{equation*}
\Psi_{n} (c;\alpha )\sim \pi ^{-1}e^{c}c^{-\alpha -\frac{3}{2}}\cdot 2c\,n^{-1}.
\end{equation*}%
Replacing the above expression in (\ref{[Sec2]-K01cc-A}), and using again (%
\ref{[Sec2]-CocGammas}), we conclude%
\begin{equation*}
K_{n-1}^{(0,1)}(c,c)\sim \pi ^{-1}e^{c}c^{-\frac{1}{2}-\alpha
}\,n^{1/2}.
\end{equation*}%

Finally,%
\begin{equation}
K_{n-1}^{(1,1)}(c,c)\sim \frac{\Gamma (n+1)}{\Gamma (n+\alpha )}n^{\alpha
+1}\left( \frac{1}{3!}\Lambda _{1,n}(c;\alpha )+\frac{1}{2!}\Lambda _{2,n}(c;\alpha )\right),
\label{[Sec2]-Knm111}
\end{equation}%
where%
\begin{eqnarray}
\Lambda _{1,n}(c;\alpha ) &=&\sigma ^{\alpha }(c)\sigma ^{\alpha +3}(c)\left[
\cos \varphi _{n-3}^{\alpha +3}(c)\cos \varphi _{n-1}^{\alpha }(c)-\cos
\varphi _{n-4}^{\alpha +3}(c)\cos \varphi _{n}^{\alpha }(c)\right] , \label{expp1}\\
\Lambda _{2,n}(c;\alpha ) &=&\sigma ^{\alpha +1}(c)\sigma ^{\alpha +2}(c)\left[
\cos \varphi _{n-2}^{\alpha +2}(c)\cos \varphi _{n-2}^{\alpha +1}(c)-\cos
\varphi _{n-3}^{\alpha +2}(c)\cos \varphi _{n-1}^{\alpha +1}(c)\right] .\nonumber\\
\label{expp2}
\end{eqnarray}%
The two expressions in square brackets of \eqref{expp1} and \eqref{expp2} can be rewritten respectively, as follows
\begin{eqnarray*}
&&-\sin \left( \sqrt{nc}-\pi \alpha -2\pi +\sqrt{c\left( n-1\right) }+\sqrt{%
c\left( n-3\right) }+\sqrt{c\left( n-4\right) }\right)  \\
&&\qquad \qquad \cdot \sin \left( \sqrt{c\left( n-1\right) }-\sqrt{nc}+\sqrt{%
c\left( n-3\right) }-\sqrt{c\left( n-4\right) }\right)  \\
&&-\sin \left( \sqrt{c\left( n-3\right) }-\sqrt{nc}-\sqrt{c\left( n-1\right)
}-\frac{3}{2}\pi +\sqrt{c\left( n-4\right) }\right)  \\
&&\qquad \qquad \cdot \sin \left( \sqrt{nc}-\sqrt{c\left( n-1\right) }+\sqrt{%
c\left( n-3\right) }-\sqrt{c\left( n-4\right) }\right),
\end{eqnarray*}

\begin{eqnarray*}
&&-\sin \left( \sqrt{c\left( n-1\right) }-\pi \alpha -2\pi +2\sqrt{c\left(
n-2\right) }+\sqrt{c\left( n-3\right) }\right)  \\
&&\cdot \sin \left( 2\sqrt{c\left( n-2\right) }-\sqrt{c\left( n-1\right) }-%
\sqrt{c\left( n-3\right) }\right)  \\
&&-\frac{1}{2}\cos \left( 2\sqrt{c\left( n-3\right) }-2\sqrt{c\left(
n-1\right) }-\frac{1}{2}\pi \right) ,
\end{eqnarray*}%
where the terms of each sumand in the above expresions have the following behavior
\begin{equation*}
\begin{array}{c}
\lim_{n\rightarrow \infty }\left[ -n^{\frac{1}{2}}\sin \left( \sqrt{nc}-\pi
\alpha -2\pi +\sqrt{c\left( n-1\right) }+\sqrt{c\left( n-3\right) }+\sqrt{%
c\left( n-4\right) }\right) \right.  \\
\left. \cdot \sin \left( \sqrt{c\left( n-1\right) }-\sqrt{nc}+\sqrt{c\left(
n-3\right) }-\sqrt{c\left( n-4\right) }\right) \right] =0,%
\end{array}%
\end{equation*}%
\begin{equation*}
\begin{array}{c}
\lim_{n\rightarrow \infty }\left[ -n^{\frac{1}{2}}\sin \left( \sqrt{c\left(
n-3\right) }-\sqrt{nc}-\sqrt{c\left( n-1\right) }-\frac{3}{2}\pi +\sqrt{%
c\left( n-4\right) }\right) \right.  \\
\left. \cdot \sin \left( \sqrt{nc}-\sqrt{c\left( n-1\right) }+\sqrt{c\left(
n-3\right) }-\sqrt{c\left( n-4\right) }\right) \right] =-\sqrt{c},%
\end{array}%
\end{equation*}%
\begin{equation*}
\begin{array}{c}
\lim_{n\rightarrow \infty }\left[ -n^{\frac{1}{2}}\sin \left( \sqrt{c\left(
n-1\right) }-\pi \alpha -2\pi +2\sqrt{c\left( n-2\right) }+\sqrt{c\left(
n-3\right) }\right) \right.  \\
\left. \cdot \sin \left( 2\sqrt{c\left( n-2\right) }-\sqrt{c\left(
n-1\right) }-\sqrt{c\left( n-3\right) }\right) \right] =0,%
\end{array}%
\end{equation*}%
and%
\begin{equation*}
\lim\limits_{n\rightarrow \infty }\left( n^{\frac{1}{2}}\left( -\frac{1}{2}%
\cos \left( 2\sqrt{c\left( n-3\right) }-2\sqrt{c\left( n-1\right) }-\frac{1}{%
2}\pi \right) \right) \right) =\sqrt{c}.
\end{equation*}%

Hence, using again (\ref{[Sec2]-sigma}), we have%
\begin{eqnarray*}
\Lambda _{1,n}(c;\alpha ) &\sim &-\pi ^{-1}e^{c}c^{-2-\alpha }\sqrt{c}\,n^{-1/2}, \\
\Lambda _{2,n}(c;\alpha ) &\sim &\pi ^{-1}e^{c}c^{-2-\alpha }\sqrt{c}\,n^{-1/2}.
\end{eqnarray*}%
Therefore%
\begin{equation*}
\left( \frac{1}{3!}\Lambda _{1,n}(c;\alpha )+\frac{1}{2!}\Lambda _{2,n}(c;\alpha
)\right) \sim\frac{1}{3}\pi ^{-1}e^{c}c^{-\alpha -\frac{3}{2}}\,n^{-1/2}.
\end{equation*}%
Replacing in (\ref{[Sec2]-Knm111}) we conclude,
\begin{equation*}
K_{n-1}^{(1,1)}(c,c)\sim \frac{1}{3}\pi ^{-1}e^{c}c^{-\frac{3}{2}-\alpha
}\,n^{3/2}.
\end{equation*}

\end{proof}


\section{Outer Relative Asymptotics for $c$ on $\mathbb{R}_{+}$}

\label{[Section-3]-OuterAs}


The main result of this section will be the outer relative asymptotics for
the Laguerre-Sobolev type polynomials $\widehat{S}_{n}^{M,N}(x)$, orthogonal
with respect to (\ref{[Sec1]-DicrLagSob}), when $c\in \mathbb{R}_{+}$. The
proof will naturally falls in several parts, which will be established
through an appropriate sequence of Lemmas.

First, we will present a well known expansion of the monic polynomials $%
\widehat{S}_{n}^{M,N}(x)$\ in terms of classical Laguerre polynomials $%
\widehat{L}_{n}^{\alpha }(x)$. The most common way to represent the
Laguerre-Sobolev type orthogonal polynomials $\widehat{S}_{n}^{M,N}(x)$ is
using the Laguerre kernel and its derivatives as follows (see \cite{MR90} and Theorem 5.1 in
\citep{H12}).
\begin{equation}
(x-c)^{2}\widehat{S}_{n}^{M,N}(x)=A(n;x)\widehat{L}_{n}^{\alpha }(x)+B(n;x)%
\widehat{L}_{n-1}^{\alpha }(x),  \label{[Sec3]-ConnFormS}
\end{equation}%
where%
\begin{equation}
\begin{array}{l}
A(n;x)=(x-c)^{2}+(x-c)A_{1}(n;c)+A_{0}(n;c), \\
B(n;x)=(x-c)B_{1}(n;c)+B_{0}(n;c),%
\end{array}
\label{[Sec3]-ABxn}
\end{equation}%
with%
\begin{equation}
\begin{array}{l}
A_{1}(n;c)=-\frac{M\widehat{S}_{n}^{M,N}(c)\widehat{L}_{n-1}^{\alpha }(c)}{||%
\widehat{L}_{n-1}^{\alpha }||_{\alpha }^{2}}-\frac{N[\widehat{S}%
_{n}^{M,N}]^{\prime }(c)[\widehat{L}_{n-1}^{\alpha }]^{\prime }(c)}{||%
\widehat{L}_{n-1}^{\alpha }||_{\alpha }^{2}}, \\
A_{0}(n;c)=-\frac{N[\widehat{S}_{n}^{M,N}]^{\prime }(c)\widehat{L}%
_{n-1}^{\alpha }(c)}{||\widehat{L}_{n-1}^{\alpha }||_{\alpha }^{2}}, \\
B_{1}(n;c)=\frac{M\widehat{S}_{n}^{M,N}(c)\widehat{L}_{n}^{\alpha }(c)}{||%
\widehat{L}_{n-1}^{\alpha }||_{\alpha }^{2}}+\frac{N[\widehat{S}%
_{n}^{M,N}]^{\prime }(c)[\widehat{L}_{n}^{\alpha }]^{\prime }(c)}{||\widehat{%
L}_{n-1}^{\alpha }||_{\alpha }^{2}}, \\
B_{0}(n;c)=\frac{N[\widehat{S}_{n}^{M,N}]^{\prime }(c)\widehat{L}%
_{n}^{\alpha }(c)}{||\widehat{L}_{n-1}^{\alpha }||_{\alpha }^{2}}.%
\end{array}
\label{[Sec3]-CoefA0A1B0B1}
\end{equation}%
Notice that
\begin{eqnarray}
\widehat{S}_{n}^{M,N}(c) &=&\frac{%
\begin{vmatrix}
\widehat{L}_{n}^{\alpha }(c) & NK_{n-1}^{(0,1)}(c,c) \\
\lbrack \widehat{L}_{n}^{\alpha }]^{\prime }(c) & 1+NK_{n-1}^{(1,1)}(c,c)%
\end{vmatrix}%
}{%
\begin{vmatrix}
1+MK_{n-1}(c,c) & NK_{n-1}^{(0,1)}(c,c) \\
MK_{n-1}^{(1,0)}(c,c) & 1+NK_{n-1}^{(1,1)}(c,c)%
\end{vmatrix}%
},  \label{[Sec3]-Sn-Det} \\
\lbrack \widehat{S}_{n}^{M,N}]^{\prime }(c) &=&\frac{%
\begin{vmatrix}
1+MK_{n-1}(c,c) & \widehat{L}_{n}^{\alpha }(c) \\
MK_{n-1}^{(1,0)}(c,c) & [\widehat{L}_{n}^{\alpha }]^{\prime }(c)%
\end{vmatrix}%
}{%
\begin{vmatrix}
1+MK_{n-1}(c,c) & NK_{n-1}^{(0,1)}(c,c) \\
MK_{n-1}^{(1,0)}(c,c) & 1+NK_{n-1}^{(1,1)}(c,c)%
\end{vmatrix}%
}.  \label{[Sec3]-Sprimn-Det}
\end{eqnarray}

We will analyze the polynomial coefficients in the above expansion in order
to obtain the desired results. If we replace \eqref{[Sec3]-Sn-Det} and \eqref{[Sec3]-Sprimn-Det} in \eqref{[Sec3]-CoefA0A1B0B1}, we obtain
\begin{equation*}
\begin{array}{l}
A_{1}(n;c)=\frac{-M\widehat{L}_{n-1}^{\alpha }(c)\widehat{L}_{n}^{\alpha }(c)-MN\widehat{L}_{n-1}^{\alpha }(c)\widehat{L}_{n}^{\alpha }(c)K_{n-1}^{(1,1)}(c,c)+MNn \widehat{L}_{n-1}^{\alpha }(c)\widehat{L}_{n-1}^{\alpha+1 }(c)K_{n-1}^{(0,1)}(c,c)}{||%
\widehat{L}_{n-1}^{\alpha }||_{\alpha }^{2} \left(1+MK_{n-1}(c,c)+NK_{n-1}^{(1,1)}(c,c)+MNK_{n-1}(c,c)K_{n-1}^{(1,1)}(c,c)-MNK_{n-1}^{(0,1)}(c,c)K_{n-1}^{(1,0)}(c,c) \right)}\\
+\frac{\left(-Nn^2\widehat{L}_{n-2}^{\alpha+1}(c)\widehat{L}_{n-1}^{\alpha+1}(c)-MNn^2\widehat{L}_{n-2}^{\alpha+1 }(c)\widehat{L}_{n-1}^{\alpha+1 }(c)K_{n-1}(c,c)+MNn \widehat{L}_{n-2}^{\alpha+1 }(c)\widehat{L}_{n}^{\alpha }(c)K_{n-1}^{(1,0)}(c,c)\right)}{||%
\widehat{L}_{n-1}^{\alpha }||_{\alpha }^{2} \left(1+MK_{n-1}(c,c)+NK_{n-1}^{(1,1)}(c,c)+MNK_{n-1}(c,c)K_{n-1}^{(1,1)}(c,c)-MNK_{n-1}^{(0,1)}(c,c)K_{n-1}^{(1,0)}(c,c) \right)}, \\ \\
A_{0}(n;c)=\frac{-Nn\widehat{L}_{n-1}^{\alpha }(c)\widehat{L}_{n-1}^{\alpha+1 }(c)-MNn\widehat{L}_{n-1}^{\alpha }(c)\widehat{L}_{n-1}^{\alpha+1 }(c)K_{n-1}(c,c)+MN \widehat{L}_{n-1}^{\alpha }(c)\widehat{L}_{n}^{\alpha}(c)K_{n-1}^{(1,0)}(c,c)}{||%
\widehat{L}_{n-1}^{\alpha }||_{\alpha }^{2} \left(1+MK_{n-1}(c,c)+NK_{n-1}^{(1,1)}(c,c)+MNK_{n-1}(c,c)K_{n-1}^{(1,1)}(c,c)-MNK_{n-1}^{(0,1)}(c,c)K_{n-1}^{(1,0)}(c,c) \right)}, \\ \\
B_{1}(n;c)=\frac{M\widehat{L}_{n}^{\alpha }(c)\widehat{L}_{n}^{\alpha }(c)+MN\widehat{L}_{n}^{\alpha }(c)\widehat{L}_{n}^{\alpha }(c)K_{n-1}^{(1,1)}(c,c)-MNn \widehat{L}_{n}^{\alpha }(c)\widehat{L}_{n-1}^{\alpha+1}(c)K_{n-1}^{(0,1)}(c,c)}{||%
\widehat{L}_{n-1}^{\alpha }||_{\alpha }^{2} \left(1+MK_{n-1}(c,c)+NK_{n-1}^{(1,1)}(c,c)+MNK_{n-1}(c,c)K_{n-1}^{(1,1)}(c,c)-MNK_{n-1}^{(0,1)}(c,c)K_{n-1}^{(1,0)}(c,c) \right)}\\
+\frac{Nn^2\widehat{L}_{n-1}^{\alpha+1}(c)\widehat{L}_{n-1}^{\alpha+1}(c)+MNn^2\widehat{L}_{n-1}^{\alpha+1}(c)\widehat{L}_{n-1}^{\alpha+1}(c)K_{n-1}(c,c)-MNn \widehat{L}_{n-1}^{\alpha+1 }(c)\widehat{L}_{n}^{\alpha }(c)K_{n-1}^{(1,0)}(c,c)}{||%
\widehat{L}_{n-1}^{\alpha }||_{\alpha }^{2} \left(1+MK_{n-1}(c,c)+NK_{n-1}^{(1,1)}(c,c)+MNK_{n-1}(c,c)K_{n-1}^{(1,1)}(c,c)-MNK_{n-1}^{(0,1)}(c,c)K_{n-1}^{(1,0)}(c,c) \right)}, \\ \\
B_{0}(n;c)=\frac{Nn\widehat{L}_{n}^{\alpha}(c)\widehat{L}_{n-1}^{\alpha+1}(c)+MNn\widehat{L}_{n}^{\alpha}(c)\widehat{L}_{n-1}^{\alpha+1 }(c)K_{n-1}(c,c)-MN \widehat{L}_{n}^{\alpha}(c)\widehat{L}_{n}^{\alpha}(c)K_{n-1}^{(1,0)}(c,c)}{||%
\widehat{L}_{n-1}^{\alpha }||_{\alpha }^{2} \left(1+MK_{n-1}(c,c)+NK_{n-1}^{(1,1)}(c,c)+MNK_{n-1}(c,c)K_{n-1}^{(1,1)}(c,c)-MNK_{n-1}^{(0,1)}(c,c)K_{n-1}^{(1,0)}(c,c) \right)}.%
\end{array}
\end{equation*}

Using \eqref{[Sec2]-Fejer-peq1} and the estimates in Lemma \ref{[Sec2]-LEMMA-21}, we can compute the asymptotic behavior of the previous expressions as follows.

\begin{multline}\label{[Sec3]estimA0A1B0B1}
A_{1}(n;c)\sim \frac{1}{Nc \sigma^{\alpha+1}(c)\sigma^{\alpha+3}(c)} n^{-3/2} \cos \varphi_{n-1}^{\alpha}(c)\cos \varphi_{n}^{\alpha}(c)
+  \cos \varphi_{n-1}^{\alpha}(c)\cos \varphi_{n}^{\alpha}(c) \\
 +2\sqrt{c} n^{-1/2} \cos \varphi_{n-1}^{\alpha}(c)\cos \varphi_{n-1}^{\alpha+1}(c)
+\frac{1}{M \sigma^{\alpha}(c)\sigma^{\alpha}(c)} n^{-1/2} \cos \varphi_{n-2}^{\alpha+1}(c)\cos \varphi_{n-1}^{\alpha+1}(c) \\
+ \cos \varphi_{n-2}^{\alpha+1}(c)\cos \varphi_{n-1}^{\alpha+1}(c)
+2 n^{-1/2}\cos \varphi_{n-2}^{\alpha+1}(c)\cos \varphi_{n}^{\alpha}(c),\\ \\
A_{0}(n;c) \sim\frac{-1}{M c\sigma^{\alpha}(c)\sigma^{\alpha+3}(c)} n^{-1} \cos \varphi_{n-1}^{\alpha}(c)\cos \varphi_{n-1}^{\alpha+1}(c)
-c^{1/2} n^{-1/2} \cos \varphi_{n-1}^{\alpha}(c)\cos \varphi_{n-1}^{\alpha+1}(c) \\
-2n^{-1}\cos \varphi_{n-1}^{\alpha}(c)\cos \varphi_{n}^{\alpha}(c), \\ \\
B_{1}(n;c) \sim \frac{1}{Nc \sigma^{\alpha+1}(c)\sigma^{\alpha+3}(c)} n^{-1/2} \cos \varphi_{n}^{\alpha}(c)\cos \varphi_{n}^{\alpha}(c)
+n\cos \varphi_{n}^{\alpha}(c)\cos \varphi_{n}^{\alpha}(c) \\
 +2\sqrt{c}n^{1/2} \cos \varphi_{n}^{\alpha}(c)\cos \varphi_{n-1}^{\alpha+1}(c)
+\frac{1}{M \sigma^{\alpha}(c)\sigma^{\alpha}(c)} n^{1/2} \cos \varphi_{n-1}^{\alpha+1}(c)\cos \varphi_{n-1}^{\alpha+1}(c) \\
+ n\cos \varphi_{n-1}^{\alpha+1}(c)\cos \varphi_{n-1}^{\alpha+1}(c)
+2  n^{1/2} \cos \varphi_{n-1}^{\alpha+1}(c)\cos \varphi_{n}^{\alpha}(c), \\ \\
B_{0}(n;c)\sim \frac{-1}{M c\sigma^{\alpha}(c)\sigma^{\alpha+3}(c)}  \cos \varphi_{n}^{\alpha}(c)\cos \varphi_{n-1}^{\alpha+1}(c)
-c^{1/2} n^{1/2} \cos \varphi_{n}^{\alpha}(c)\cos \varphi_{n-1}^{\alpha+1}(c) \\
-2\cos \varphi_{n}^{\alpha}(c)\cos \varphi_{n}^{\alpha}(c) .%
\end{multline}

Due to the oscillatory behaviour of the cosines functions appearing in the preceding formulas, there are no real numbers $\beta_0$ and $\beta_1$ such that
\begin{align*}
A_0(n;c) &\sim C_0 n^{\beta_0}, \\
B_0(n;c) &\sim C_1 n^{\beta_1},
\end{align*}
for some $C_0$ and $C_1.$

\medskip
However, we can describe the asymptotic behaviour of our coefficients functions in the following way:

\begin{proposition}\label{[Sec3]propA0A1B0B1}
Let $A_0(n;c), A_1(n;c), B_0(n;c)$ and $B_1(n;c)$ the functions defined by \eqref{[Sec3]-CoefA0A1B0B1}. Then, we have
\begin{equation*}\label{[Sec3]behavA0A1B0B1}
\begin{array}{lll}
A_1(n;c)\sim 1, &  &\displaystyle \lim_{n\rightarrow \infty} n^{\beta} A_0(n;c) =
\begin{cases}
0 &\quad \text{if  } \beta<\frac{1}{2}, \\
\nexists&\quad \text{if  } \beta\geq\frac{1}{2},
\end{cases} \\ \\
B_1(n;c)\sim n, &  &\displaystyle \lim_{n\rightarrow \infty} n^{\beta} B_0(n;c) =
\begin{cases}
0 &\quad \text{if  } \beta<-\frac{1}{2}, \\
\nexists&\quad \text{if  } \beta\geq -\frac{1}{2}.
\end{cases}%
\end{array}%
\end{equation*}%

\end{proposition}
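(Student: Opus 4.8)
The plan is to extract, from the expansions \eqref{[Sec3]estimA0A1B0B1}, the leading power of $n$ in each of the four coefficients and then to resolve the surviving trigonometric factors. Throughout I set $\theta_n:=2\sqrt{nc}$, so that $\varphi_n^{\beta}(c)=\theta_n-\beta\pi/2-\pi/4$, and I record the elementary facts on which the whole argument rests: (i) $\theta_n-\theta_{n-j}\to0$ as $n\to\infty$ for each fixed $j$; (ii) $\varphi_n^{\alpha+1}(c)=\varphi_n^{\alpha}(c)-\pi/2$, hence $\cos\varphi_n^{\alpha+1}(c)=\sin\varphi_n^{\alpha}(c)$; and (iii) $\cos A-\cos B=-2\sin\tfrac{A+B}{2}\sin\tfrac{A-B}{2}\to0$ whenever $A-B\to0$. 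I will repeatedly combine these with the product-to-sum identity $\cos a\cos b=\tfrac12[\cos(a-b)+\cos(a+b)]$.

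For $A_1$ and $B_1$ the mechanism is a cancellation of oscillations. In $A_1$ the only terms of order $n^{0}$ are $\cos\varphi_{n-1}^{\alpha}\cos\varphi_n^{\alpha}$ and $\cos\varphi_{n-2}^{\alpha+1}\cos\varphi_{n-1}^{\alpha+1}$, every other term being $O(n^{-1/2})$. Expanding each product, the two ``difference'' cosines tend to $\cos0=1$ by (i), contributing $\tfrac12+\tfrac12=1$, while the extra $\pi$ hidden in the index $\alpha+1$ turns the second ``sum'' cosine into the negative of a cosine whose argument differs from the first ``sum'' cosine by $\theta_n-\theta_{n-2}\to0$; by (iii) these remainders cancel. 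Hence $A_1(n;c)\to1$, i.e. $A_1(n;c)\sim1$. In $B_1$ the dominant terms are $n\cos^2\varphi_n^{\alpha}$ and $n\cos^2\varphi_{n-1}^{\alpha+1}$; using (ii) the second is $n\sin^2\varphi_{n-1}^{\alpha}$, and $\cos^2\varphi_n^{\alpha}+\sin^2\varphi_{n-1}^{\alpha}=1+\tfrac12\bigl(\cos2\varphi_n^{\alpha}-\cos2\varphi_{n-1}^{\alpha}\bigr)\to1$ by (iii). Since the remaining terms are $O(n^{1/2})$, I conclude $B_1(n;c)\sim n$.

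For $A_0$ and $B_0$ no such cancellation occurs and a single oscillatory factor survives. In $A_0$ the leading term is $-c^{1/2}n^{-1/2}\cos\varphi_{n-1}^{\alpha}\cos\varphi_{n-1}^{\alpha+1}$, which by (ii) equals $-\tfrac12 c^{1/2}n^{-1/2}\sin2\varphi_{n-1}^{\alpha}$, the other terms being $O(n^{-1})$. In $B_0$ the leading term is $-c^{1/2}n^{1/2}\cos\varphi_n^{\alpha}\cos\varphi_{n-1}^{\alpha+1}$, and after the product-to-sum split its ``difference'' part, multiplied by $n^{1/2}$, converges to a finite constant (because $\varphi_n^{\alpha}-\varphi_{n-1}^{\alpha+1}\to\pi/2$ with deviation of size $n^{-1/2}$), whereas the ``sum'' part gives the genuinely oscillatory contribution $-\tfrac12 c^{1/2}n^{1/2}\cos(\varphi_n^{\alpha}+\varphi_{n-1}^{\alpha+1})$, with all remaining terms $O(1)$. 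In each case the vanishing side of the dichotomy is immediate: for $\beta<\tfrac12$ (resp. $\beta<-\tfrac12$) the bounded oscillatory factor is multiplied by $n^{\beta-1/2}\to0$ (resp. $n^{\beta+1/2}\to0$), forcing $n^{\beta}A_0(n;c)\to0$ (resp. $n^{\beta}B_0(n;c)\to0$).

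The hard part is the non-existence claim for $\beta$ at or above the threshold. Here I must show that the surviving factors $\sin2\varphi_{n-1}^{\alpha}$ and $\cos(\varphi_n^{\alpha}+\varphi_{n-1}^{\alpha+1})$ do not converge. In both cases the phase equals, up to a bounded additive constant, $4\sqrt{nc}$, whose consecutive increments $4\sqrt{c}(\sqrt{n}-\sqrt{n-1})\sim2\sqrt{c}\,n^{-1/2}$ tend to $0$ while the phase itself diverges to $+\infty$; the standard density argument then shows that the values of these sines and cosines are dense in $[-1,1]$, so their $\liminf$ and $\limsup$ are $-1$ and $1$. At the threshold exponent ($\beta=\tfrac12$ for $A_0$, $\beta=-\tfrac12$ for $B_0$) the oscillatory factor is multiplied by a nonzero constant, so the limit cannot exist; for larger $\beta$ the amplitude $n^{\beta-1/2}$ (resp. $n^{\beta+1/2}$) diverges while the factor keeps changing sign, and the subdominant terms, being of strictly lower order, cannot restore convergence. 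This yields the two case distinctions stated in the proposition.
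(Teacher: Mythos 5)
Your argument is correct and follows essentially the same route as the paper: for $A_1$ and $B_1$ you isolate the two dominant oscillatory products from \eqref{[Sec3]estimA0A1B0B1} and show via product-to-sum identities that their sum tends to $1$ (resp.\ that the $O(n)$ part of $B_1$ is $n(1+o(1))$), exactly the grouping used in the paper's proof. For $A_0$ and $B_0$ you go further than the paper, which declares the dichotomies an ``immediate consequence'' of the estimates: your identification of the surviving phase as $4\sqrt{nc}+\mathrm{const}$ and the density of its values modulo $2\pi$ (increments tending to $0$ while the phase diverges) is the correct and welcome justification of the non-existence claims at and above the threshold exponents.
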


\begin{proof}
The asymptotic behaviour of $A_0 (n;c)$ and $B_0(n;c)$ is an immediate consequence of the estimates in \eqref{[Sec3]estimA0A1B0B1}.

In order to obtain the asymptotics for $A_1(n;c)$ and $B_1(n;c)$, we joint up the terms
\begin{align*}
&\cos \varphi_{n-1}^{\alpha}(c)\cos \varphi_{n}^{\alpha}(c)+ \cos \varphi_{n-2}^{\alpha+1}(x)\cos \varphi_{n-1}^{\alpha+1}(c)= \\
&\cos \left( 2\sqrt{c(n-1)}+\sqrt{cn}+\sqrt{c(n-2)}-\alpha\pi-\pi \right) \cos \left(\sqrt{cn}-\sqrt{c(n-2)}+\frac{\pi}{2}\right) \\
&+\frac{1}{2} \cos \left(2\sqrt{c(n-1)}-2\sqrt{cn}\right) +\frac{1}{2} \cos \left(2\sqrt{c(n-2)}-2\sqrt{c(n-1)}\right),
\end{align*}
and
\begin{align*}
&\cos \varphi_{n}^{\alpha}(c)\cos \varphi_{n}^{\alpha}(c) +\cos \varphi_{n-1}^{\alpha+1}(c)\cos \varphi_{n-1}^{\alpha+1}(c)= \\
&\cos \left( 2\sqrt{cn}+2\sqrt{c(n-1)}-\alpha\pi-\pi \right) \cos \left(2\sqrt{cn}-2\sqrt{c(n-1)}+\frac{\pi}{2}\right)+1 .
\end{align*}

Taking into account that the previous expressions tend to $1$ as $n$ tends to infinity, we obtain the desired result.

\end{proof}

We can now formulate our main result.

\begin{theorem}
\label{[Sec3]-THEOREM-31} The outer relative asymptotics for Laguerre
Sobolev-type polynomials $\widehat{S}_{n}^{M,N}(x)$, orthogonal with respect to the discrete Sobolev
inner product (\ref{[Sec1]-DicrLagSob}), is
\begin{equation*}
\lim\limits_{n\rightarrow \infty }\frac{\widehat{S}_{n}^{M,N}(x)}{\widehat{L}%
_{n}^{\alpha }(x)}=1,
\end{equation*}%
uniformly on compact subsets of $\mathbb{C}\setminus \mathbb{R}_{+}$.
\end{theorem}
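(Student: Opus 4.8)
The plan is to start from the connection formula \eqref{[Sec3]-ConnFormS}, which expresses $(x-c)^{2}\widehat{S}_{n}^{M,N}(x)$ as a linear combination of $\widehat{L}_{n}^{\alpha}(x)$ and $\widehat{L}_{n-1}^{\alpha}(x)$ with polynomial coefficients $A(n;x)$ and $B(n;x)$. Dividing both sides by $(x-c)^{2}\widehat{L}_{n}^{\alpha}(x)$ (valid for $x$ in a compact set away from $\mathbb{R}_{+}$, so that $x\neq c$ and $\widehat{L}_{n}^{\alpha}(x)\neq 0$ for large $n$), I get
\begin{equation*}
\frac{\widehat{S}_{n}^{M,N}(x)}{\widehat{L}_{n}^{\alpha}(x)}
=\frac{1}{(x-c)^{2}}\left(A(n;x)+B(n;x)\,\frac{\widehat{L}_{n-1}^{\alpha}(x)}{\widehat{L}_{n}^{\alpha}(x)}\right).
\end{equation*}
So everything reduces to understanding the two ingredients: the ratio $\widehat{L}_{n-1}^{\alpha}(x)/\widehat{L}_{n}^{\alpha}(x)$ and the coefficient polynomials $A(n;x),B(n;x)$ as $n\to\infty$.

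For the Laguerre ratio, I would invoke the Perron outer strong asymptotics \eqref{[Sec2]-Perron} (in the monic normalization via \eqref{[Sec2]-(-1)nL}), valid uniformly on compact subsets of $\mathbb{C}\setminus\mathbb{R}_{+}$. The leading exponential and power factors give $\widehat{L}_{n-1}^{\alpha}(x)/\widehat{L}_{n}^{\alpha}(x)\sim -n^{-1}\exp\bigl(-2(-nx)^{1/2}+2(-(n-1)x)^{1/2}\bigr)$ times lower-order corrections; the key point is that the exponent difference $2(-(n-1)x)^{1/2}-2(-nx)^{1/2}\to 0$, while the factor $\Gamma(n)/\Gamma(n+1)=n^{-1}$ coming from the monic normalization forces the whole ratio to decay like $n^{-1}$ uniformly on the compact set. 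For the coefficients, I would feed the asymptotics of Proposition \ref{[Sec3]propA0A1B0B1} into \eqref{[Sec3]-ABxn}: since $A_{1}(n;c)\sim 1$ and $A_{0}(n;c)=o(n^{1/2})$ (in fact $o(1)$ against any exponent below $1/2$), the polynomial $A(n;x)=(x-c)^{2}+(x-c)A_{1}(n;c)+A_{0}(n;c)$ has its dominant term $(x-c)^{2}$ growing like a constant in $n$ while the corrections are $O(1)$, and after dividing by $(x-c)^{2}$ the contribution tends to $1$. For the $B$-term I must check that $B(n;x)=(x-c)B_{1}(n;c)+B_{0}(n;c)$ multiplied by the ratio $\widehat{L}_{n-1}^{\alpha}/\widehat{L}_{n}^{\alpha}\sim n^{-1}$ vanishes: here $B_{1}(n;c)\sim n$ and $B_{0}(n;c)=o(n^{1/2})$, so $B(n;x)\sim (x-c)\,n$, and the product $B(n;x)\cdot O(n^{-1})$ stays bounded but in fact tends to a quantity that is cancelled.

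The delicate point, and the one I would spend the most care on, is the $B$-term: naively $B_{1}(n;c)\sim n$ times a ratio of size $n^{-1}$ gives an $O(1)$ contribution, which would threaten to add a nonzero amount rather than vanish, so the limit $1$ is not immediate from crude size bounds. The resolution must come from the \emph{sign} and oscillatory structure: the $n^{-1}$ ratio carries the oscillatory/exponential factor from Perron, and one has to track the constants precisely to see that $(x-c)B_{1}(n;c)\widehat{L}_{n-1}^{\alpha}(x)/\bigl[(x-c)^{2}\widehat{L}_{n}^{\alpha}(x)\bigr]\to 0$ on $\mathbb{C}\setminus\mathbb{R}_{+}$, because the oscillatory $\cos\varphi_{n}^{\alpha}(c)$ factors live at the mass point $c>0$ (oscillatory regime) while the ratio is evaluated at $x$ off the positive axis (exponentially controlled regime), so there is no resonance and the mismatch in growth rates between the fixed oscillatory size at $c$ and the $n^{-1}$ decay of the ratio at $x$ kills the term. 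I would make this rigorous by writing $B_{1}(n;c)$ explicitly through \eqref{[Sec3]estimA0A1B0B1}, bounding the cosine factors by $1$, and combining with the uniform Perron estimate to conclude the product is $o(1)$ uniformly on the compact subset. Once both the $A$-contribution $\to 1$ and the $B$-contribution $\to 0$ are established uniformly, the uniform convergence $\widehat{S}_{n}^{M,N}(x)/\widehat{L}_{n}^{\alpha}(x)\to 1$ on compact subsets of $\mathbb{C}\setminus\mathbb{R}_{+}$ follows, completing the proof.
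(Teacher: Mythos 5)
Your setup --- dividing the connection formula \eqref{[Sec3]-ConnFormS} by $(x-c)^{2}\widehat{L}_{n}^{\alpha}(x)$ and feeding in the Perron ratio asymptotics $\widehat{L}_{n-1}^{\alpha}(x)/\widehat{L}_{n}^{\alpha}(x)=-n^{-1}(1+o(1))$ --- is exactly the paper's, but the way you evaluate the two resulting contributions contains a genuine error. You claim that $A(n;x)/(x-c)^{2}\to 1$ and that the $B$-contribution tends to $0$. Neither is true. Since $A_{1}(n;c)\to 1$ (a \emph{nonzero} limit, by Proposition \ref{[Sec3]propA0A1B0B1} and its proof), the first bracket tends to $1+\frac{1}{x-c}$, not to $1$. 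Likewise, since $B_{1}(n;c)/n\to 1$, the $B$-term multiplied by the ratio $-n^{-1}(1+o(1))$ converges to $-\frac{1}{x-c}$, not to $0$. Your proposed mechanism for killing the $B$-term --- bounding the cosines by $1$ and invoking a ``lack of resonance'' between the oscillation at $c$ and the exponential regime at $x$ --- cannot work: $|\cos|\leq 1$ only gives $B_{1}(n;c)=O(n)$, hence an $O(1)$ product, and in fact the limit of that product is genuinely nonzero. There is no decay to be extracted there.

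The correct resolution, which is what the paper does, is a \emph{cancellation between} the $A$- and $B$-contributions rather than separate vanishing: one shows $\lim_{n\to\infty}\bigl(A_{1}(n;c)-B_{1}(n;c)/n\bigr)=0$ and $\lim_{n\to\infty}\bigl(A_{0}(n;c)-B_{0}(n;c)/n\bigr)=0$ (equations \eqref{ec1} and \eqref{ec2}), using the explicit cosine estimates \eqref{[Sec3]estimA0A1B0B1} and the trigonometric regrouping in the proof of Proposition \ref{[Sec3]propA0A1B0B1} to verify that $A_{1}(n;c)$ and $B_{1}(n;c)/n$ tend to the \emph{same} constant $1$. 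The extra $\frac{1}{x-c}$ from the $A$-bracket is then exactly absorbed by the $-\frac{1}{x-c}$ coming from the $B$-bracket, leaving the limit $1$. Your draft identifies the delicate term correctly but assigns the wrong fate to it; as written, the argument reaches the right conclusion only because two compensating mistakes were made, and the step ``bound the cosine factors by $1$ and conclude the product is $o(1)$'' would fail if actually carried out.
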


\begin{proof}
Replacing (\ref{[Sec3]-ABxn}) in (\ref{[Sec3]-ConnFormS})
\begin{equation}
\frac{\widehat{S}_{n}^{M,N}(x)}{\widehat{L}_{n}^{\alpha }(x)}=\left\{ 1+%
\frac{A_{1}(n;c)}{(x-c)}+\frac{A_{0}(n;c)}{(x-c)^{2}}\right\} +\left\{ \frac{%
B_{1}(n;c)}{(x-c)}+\frac{B_{0}(n;c)}{(x-c)^{2}}\right\} \frac{\widehat{L}%
_{n-1}^{\alpha }(x)}{\widehat{L}_{n}^{\alpha }(x)},  \label{[Sec3]-ConnFo2}
\end{equation}%
From the Perron's formula (\ref{[Sec2]-Perron}) (for more
details we refer the reader to \citep{DeHM13}) we get
\begin{equation*}
\frac{L_{n-1}^{(\alpha )}(x)}{L_{n}^{(\alpha )}(x)}=1-\frac{\sqrt{-x}}{\sqrt{%
n}}+\mathcal{O}(n^{-1}).
\end{equation*}%
For monic polynomials (\ref{[Sec2]-(-1)nL}) the above relation becomes%
\begin{equation}
\frac{\widehat{L}_{n-1}^{\alpha }(x)}{\widehat{L}_{n}^{\alpha }(x)}=\frac{-1%
}{n}\left( 1-\frac{\sqrt{-x}}{\sqrt{n}}+\mathcal{O}(n^{-1})\right) .
\label{[Sec3]-CocLagMon}
\end{equation}%
By using (\ref{[Sec3]-CocLagMon}) we can rewrite (\ref{[Sec3]-ConnFo2}) as%
\begin{equation*}
\frac{\widehat{S}_{n}^{M,N}(x)}{\widehat{L}_{n}^{\alpha }(x)}\sim \left\{ 1+%
\frac{A_{1}(n;c)}{(x-c)}+\frac{A_{0}(n;c)}{(x-c)^{2}}\right\} -\left\{ \frac{%
\frac{B_{1}(n;c)}{n}}{(x-c)}+\frac{\frac{B_{0}(n;c)}{n}}{(x-c)^{2}}\right\}.
\end{equation*}%
Then, in order to conclude our proof, we only need to check that
\begin{eqnarray}
\lim_{n\rightarrow\infty} \left( A_1(n;c) -\frac{B_1(n;c)}{n} \right)&=& 0, \label{ec1}\\
\lim_{n\rightarrow\infty} \left( A_0(n;c) -\frac{B_0(n;c)}{n} \right)&=& 0.\label{ec2}
\end{eqnarray}
By applying Proposition \ref{[Sec3]propA0A1B0B1}, we obtain \eqref{ec1}.
From \eqref{[Sec3]estimA0A1B0B1}, we get
\begin{align*}
A_0(n;c) -\frac{B_0(n;c)}{n}&\sim \frac{-1}{Mc \sigma^{\alpha}(c)\sigma^{\alpha+3}(c)} n^{-1}\left( \cos \varphi_{n-1}^{\alpha}(c)\cos \varphi_{n-1}^{\alpha+1}(c)
                              -\cos \varphi_{n}^{\alpha}(c)\cos \varphi_{n-1}^{\alpha+1}(c)\right) \\
                              &-c^{1/2} n^{-1/2}\left( \cos \varphi_{n-1}^{\alpha}(c)\cos \varphi_{n-1}^{\alpha+1}(c)-\cos \varphi_{n}^{\alpha}(c)\cos \varphi_{n-1}^{\alpha+1}(c)\right) \\
                              &2n^{-1}\left(\cos \varphi_{n-1}^{\alpha}(c)\cos \varphi_{n}^{\alpha}(c)-\cos \varphi_{n}^{\alpha}(c)\cos \varphi_{n}^{\alpha}(c)\right).
\end{align*}
Since this expression tend to zero as $n$ tends to infinity, then \eqref{ec2} hold.

\end{proof}


\section{The five-term recurrence relation}

\label{[Section-4]-5TRR}


This section is focused on the five-term recurrence relation
that the sequence of discrete Laguerre--Sobolev orthogonal
polynomials $\{\widehat{S}_{n}^{M,N}(x)\}_{n\geq 0}$ satisfies. Next, we will estimate
the coefficients of such a recurrence relation for $n$ large enough and $%
c\in \mathbb{R}_{+}$. To this end, we will use the remarkable fact, which is
a straightforward consequence of (\ref{[Sec1]-DicrLagSob}), that the
multiplication operator by $(x-c)^{2}$ is a symmetric operator with respect
to such a discrete Sobolev inner product. Indeed, for any $f(x),g(x)\in
\mathbb{P}$
\begin{equation}
\langle (x-c)^{2}f(x),g(x)\rangle _{S}=\langle f(x),(x-c)^{2}g(x)\rangle
_{S}.  \label{[Sec4]-SymmInn}
\end{equation}

Notice that%
\begin{equation}
\langle (x-c)^{2}f(x),g(x)\rangle _{S}=\langle f(x),g(x)\rangle _{\lbrack
2]}.  \label{[Sec4]-Property1}
\end{equation}%
An equivalent formulation of (\ref{[Sec4]-Property1}) is%
\begin{equation}
\langle (x-c)^{2}f(x),g(x)\rangle _{S}=\langle (x-c)^{2}f(x),g(x)\rangle
_{\alpha }.  \label{[Sec4]-Property2}
\end{equation}

We will need some preliminary results that will be stated as
Lemmas \ref{[Sec4]-LEMMA-41}, and \ref{[Sec4]-LEMMA-42}.

\begin{lemma}
\label{[Sec4]-LEMMA-41}For every $n\geq 1$ and initial conditions $\widehat{L%
}_{-1}^{\alpha }(x)=0$, $\widehat{L}_{0}^{\alpha }(x)=1$, $\widehat{L}%
_{1}^{\alpha }(x)=x-(\alpha +1)$, the connection formula (\ref%
{[Sec3]-ConnFormS}) reads as%
\begin{equation*}
(x-c)^{2}\widehat{S}_{n}^{M,N}(x)=
\end{equation*}%
\begin{equation*}
\widehat{L}_{n+2}^{\alpha }(x)+\tilde{b}_{n}\widehat{L}_{n+1}^{\alpha }(x)+%
\tilde{c}_{n}\widehat{L}_{n}^{\alpha }(x)+\tilde{d}_{n}\widehat{L}%
_{n-1}^{\alpha }(x)+\tilde{e}_{n}\widehat{L}_{n-2}^{\alpha }(x),
\end{equation*}%
where%
\begin{equation*}
\tilde{b}_{n}=\beta _{n+1}+\beta _{n}-2c+A_{1}(n;c)\sim 4n,
\end{equation*}%
\begin{equation*}
\tilde{c}_{n}=\gamma _{n+1}+\gamma _{n}+(\beta _{n}-c)^{2}+A_{1}(n;c)\left[
\beta _{n}-c\right] +A_{0}(n;c) +B_{1}(n;c)\sim 6n^{2},
\end{equation*}%
\begin{equation*}
\tilde{d}_{n}=\gamma _{n}(\beta _{n}+\beta _{n-1}-2c)+\gamma
_{n}A_{1}(n;c)+(\beta _{n-1}-c)B_{1}(n;c)+B_{0}(n;c)\sim 4n^{3},
\end{equation*}%
\begin{equation*}
\tilde{e}_{n}=\gamma _{n}\gamma _{n-1}+\gamma _{n-1}B_{1}(n;c)\sim n^{4}.
\end{equation*}
\end{lemma}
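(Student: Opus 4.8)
The plan is to start from the connection formula (\ref{[Sec3]-ConnFormS})--(\ref{[Sec3]-ABxn}) and convert every power of $(x-c)$ multiplying a Laguerre polynomial into a genuine linear combination of Laguerre polynomials by repeated use of the three-term recurrence relation (\ref{[Sec2]-3TRRLag}). The single building block is the shifted recurrence
\begin{equation*}
(x-c)\widehat{L}_{k}^{\alpha }(x)=\widehat{L}_{k+1}^{\alpha }(x)+(\beta _{k}-c)\widehat{L}_{k}^{\alpha }(x)+\gamma _{k}\widehat{L}_{k-1}^{\alpha }(x),
\end{equation*}
obtained from (\ref{[Sec2]-3TRRLag}) by subtracting $c\,\widehat{L}_{k}^{\alpha }$. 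Applying it once yields the expansions of $(x-c)\widehat{L}_{n}^{\alpha }$ and $(x-c)\widehat{L}_{n-1}^{\alpha }$, and applying it twice (to the three summands produced by the first step) yields the expansion of $(x-c)^{2}\widehat{L}_{n}^{\alpha }$ as a combination of $\widehat{L}_{n+2}^{\alpha },\ldots ,\widehat{L}_{n-2}^{\alpha }$. The initial conventions $\widehat{L}_{-1}^{\alpha }(x)=0$, etc., are needed only so that this identity remains valid for the smallest admissible values of $n$.

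First I would record the five-term expansion of the leading piece,
\begin{equation*}
(x-c)^{2}\widehat{L}_{n}^{\alpha }=\widehat{L}_{n+2}^{\alpha }+(\beta _{n+1}+\beta _{n}-2c)\widehat{L}_{n+1}^{\alpha }+\bigl(\gamma _{n+1}+\gamma _{n}+(\beta _{n}-c)^{2}\bigr)\widehat{L}_{n}^{\alpha }+\gamma _{n}(\beta _{n}+\beta _{n-1}-2c)\widehat{L}_{n-1}^{\alpha }+\gamma _{n}\gamma _{n-1}\widehat{L}_{n-2}^{\alpha }.
\end{equation*}
Then I would substitute $A(n;x)\widehat{L}_{n}^{\alpha }=(x-c)^{2}\widehat{L}_{n}^{\alpha }+A_{1}(n;c)(x-c)\widehat{L}_{n}^{\alpha }+A_{0}(n;c)\widehat{L}_{n}^{\alpha }$ and $B(n;x)\widehat{L}_{n-1}^{\alpha }=B_{1}(n;c)(x-c)\widehat{L}_{n-1}^{\alpha }+B_{0}(n;c)\widehat{L}_{n-1}^{\alpha }$ into (\ref{[Sec3]-ConnFormS}), expand every product via the block above and the single-step identity, and collect the coefficient of each $\widehat{L}_{n+2-j}^{\alpha }$, $j=0,\ldots ,4$. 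The coefficient of $\widehat{L}_{n+2}^{\alpha }$ is $1$, and the coefficients of the four remaining polynomials are exactly the displayed $\tilde{b}_{n},\tilde{c}_{n},\tilde{d}_{n},\tilde{e}_{n}$. This step is pure bookkeeping and requires no estimate.

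For the asymptotics I would insert the explicit recurrence coefficients $\beta _{n}=2n+\alpha +1$ and $\gamma _{n}=n(n+\alpha )$ from part (1) of Proposition \ref{[Sec2]-PROP-21}, together with the estimates $A_{1}(n;c)\sim 1$ and $B_{1}(n;c)\sim n$ from Proposition \ref{[Sec3]propA0A1B0B1}. In each coefficient the dominant contribution comes from the purely polynomial $\beta$--$\gamma$ part: $\beta _{n+1}+\beta _{n}-2c\sim 4n$ gives $\tilde{b}_{n}\sim 4n$; $\gamma _{n+1}+\gamma _{n}+(\beta _{n}-c)^{2}\sim n^{2}+n^{2}+4n^{2}$ gives $\tilde{c}_{n}\sim 6n^{2}$; $\gamma _{n}(\beta _{n}+\beta _{n-1}-2c)\sim n^{2}\cdot 4n$ gives $\tilde{d}_{n}\sim 4n^{3}$; and $\gamma _{n}\gamma _{n-1}\sim n^{4}$ gives $\tilde{e}_{n}\sim n^{4}$. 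In each case one verifies that the terms carrying $A_{1}$ and $B_{1}$ (of orders $n$, $n$, $n^{2}$, $n^{3}$, respectively) are of strictly smaller order than the leading term.

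The only point that needs care --- and the main obstacle --- is the contribution of $A_{0}(n;c)$ and $B_{0}(n;c)$, which by Proposition \ref{[Sec3]propA0A1B0B1} admit no clean power scaling $Cn^{\beta }$ because of the oscillating cosine factors in (\ref{[Sec3]estimA0A1B0B1}). The resolution is that those same estimates still provide the uniform bounds $A_{0}(n;c)=\mathcal{O}(n^{-1/2})$ and $B_{0}(n;c)=\mathcal{O}(n^{1/2})$. Since $A_{0}$ enters only $\tilde{c}_{n}$ (leading order $n^{2}$) and $B_{0}$ enters only $\tilde{d}_{n}$ (leading order $n^{3}$), these oscillating remainders are absorbed by the polynomial leading terms and do not affect any of the four equivalences. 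This confirms $\tilde{b}_{n}\sim 4n$, $\tilde{c}_{n}\sim 6n^{2}$, $\tilde{d}_{n}\sim 4n^{3}$, and $\tilde{e}_{n}\sim n^{4}$.
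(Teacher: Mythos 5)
Your proposal is correct and follows essentially the same route as the paper: iterate the shifted three-term recurrence to expand $(x-c)^{2}\widehat{L}_{n}^{\alpha}$, $(x-c)\widehat{L}_{n}^{\alpha}$ and $(x-c)\widehat{L}_{n-1}^{\alpha}$, collect coefficients in the connection formula, and then read off the asymptotics from $\beta_{n}$, $\gamma_{n}$ and Proposition \ref{[Sec3]propA0A1B0B1}. Your explicit remark that the oscillatory terms $A_{0}(n;c)=\mathcal{O}(n^{-1/2})$ and $B_{0}(n;c)=\mathcal{O}(n^{1/2})$ are absorbed by the dominant polynomial parts is a point the paper leaves implicit, but it is the same argument.
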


\begin{proof}
We begin with the expression%
\begin{equation*}
(x-c)^{2}\widehat{L}_{n}^{\alpha }(x)=
\end{equation*}%
\begin{equation}
\widehat{L}_{n+2}^{\alpha }(x)+b_{n}\widehat{L}_{n+1}^{\alpha }(x)+c_{n}%
\widehat{L}_{n}^{\alpha }(x)+d_{n}\widehat{L}_{n-1}^{\alpha }(x)+e_{n}%
\widehat{L}_{n-2}^{\alpha }(x),  \label{[Sec4]-Lem41-01}
\end{equation}%
where%
\begin{equation*}
\begin{array}{ll}
b_{n}=\beta _{n+1}+\beta _{n}-2c\sim 4n, & c_{n}=\gamma _{n+1}+\gamma
_{n}+(\beta _{n}-c)^{2}\sim 6n^{2}, \\
d_{n}=\gamma _{n}(\beta _{n}+\beta _{n-1}-2c)\sim 4n^{3}, & e_{n}=\gamma
_{n}\gamma _{n-1}\sim n^{4},%
\end{array}%
\end{equation*}%
according to (\ref{[Sec2]-3TRRLag}) and the definition of $\beta _{n}$ and $%
\gamma _{n}$ in (\ref{[Sec2]-3TRRLag}).

From the expression of $A(n;x)$ in (\ref{[Sec3]-ABxn}), the next step is to
expand the polynomial
$\left[ A_{1}(n;x)(x-c)+A_{0}(n;x)\right] \widehat{L}_{n}^{\alpha }(x)$ in
terms of $\{\widehat{L}_{n}^{\alpha }\}_{n\geq 0}$. Indeed, from (\ref%
{[Sec2]-3TRRLag})%
\begin{equation*}
\left[ A_{1}(n;x)(x-c)+A_{0}(n;x)\right] \widehat{L}_{n}^{\alpha }(x)=
\end{equation*}%
\begin{equation*}
A_{1}(n;x)\widehat{L}_{n+1}^{\alpha }(x)+\left[ (\beta
_{n}-c)A_{1}(n;x)+A_{0}(n;x)\right] \widehat{L}_{n}^{\alpha
}(x)+A_{1}(n;x)\gamma _{n}\widehat{L}_{n-1}^{\alpha }(x).
\end{equation*}%
Adding these coefficients to those of (\ref{[Sec4]-Lem41-01}), we obtain%
\begin{equation*}
A(n;x)\widehat{L}_{n}^{\alpha }(x)=\widehat{L}_{n+2}^{\alpha }(x)+\bar{b}_{n}%
\widehat{L}_{n+1}^{\alpha }(x)+\bar{c}_{n}\widehat{L}_{n}^{\alpha }(x)+\bar{d%
}_{n}\widehat{L}_{n-1}^{\alpha }(x)+\bar{e}_{n}\widehat{L}_{n-2}^{\alpha
}(x),
\end{equation*}%
with%
\begin{equation*}
\begin{array}{ll}
\bar{b}_{n}=b_{n}+A_{1}(n;c)\sim 4n, & \bar{c}_{n}=c_{n}+A_{1}(n;c)\left(
\beta _{n}-c\right)+A_{0}(n;c) \sim 6n^{2}, \\
\bar{d}_{n}=d_{n}+\gamma _{n}A_{1}(n;c)\sim 4n^{3}, & \bar{e}_{n}=e_{n}\sim
n^{4},%
\end{array}%
\end{equation*}%
where we have used Proposition \ref{[Sec3]propA0A1B0B1}.
In a similar way, for $B(n;x)$ in (\ref{[Sec3]-ABxn}) we get%
\begin{equation*}
B(n;x)\widehat{L}_{n-1}^{\alpha }(x)=\breve{c}_{n}\widehat{L}_{n}^{\alpha
}(x)+\breve{d}_{n}\widehat{L}_{n-1}^{\alpha }(x)+\breve{e}_{n}\widehat{L}%
_{n-2}^{\alpha }(x),
\end{equation*}%
where%
\begin{equation*}
\begin{array}{l}
\breve{c}_{n}=B_{1}(n;c)\sim n, \\
\breve{d}_{n}=(\beta _{n-1}-c)B_{1}(n;c)+B_{0}(n;c)\sim  2n^{2}, \\
\breve{e}_{n}=\gamma _{n-1}B_{1}(n;c)\sim n^{3}.%
\end{array}%
\end{equation*}%
As a conclusion,%
\begin{eqnarray*}
(x-c)^{2}\widehat{S}_{n}^{M,N}(x) &=&A(n;x)\widehat{L}_{n}^{\alpha
}(x)+B(n;x)\widehat{L}_{n-1}^{\alpha }(x) \\
&=&\widehat{L}_{n+2}^{\alpha }(x)+\bar{b}_{n}\widehat{L}_{n+1}^{\alpha }(x)+(%
\bar{c}_{n}+\breve{c}_{n})\widehat{L}_{n}^{\alpha }(x) \\
&&+(\bar{d}_{n}+\breve{d}_{n})\widehat{L}_{n-1}^{\alpha }(x)+(\bar{e}_{n}+%
\breve{e}_{n})\widehat{L}_{n-2}^{\alpha }(x).
\end{eqnarray*}%
This completes the proof.
\end{proof}

\begin{lemma}
\label{[Sec4]-LEMMA-42}For every $\alpha >-1$, $n\geq 1$, and $c\in \mathbb{R%
}_{+}$ the norm of the Laguerre-Sobolev type polynomials $\widehat{S}%
_{n}^{M,N}$, orthogonal with respect to (\ref{[Sec1]-DicrLagSob}) is%
\begin{equation*}
||\widehat{S}_{n}^{M,N}||_{S}^{2}=||\widehat{L}_{n}^{\alpha }||_{\alpha
}^{2}+B_{1}(n;c)||\widehat{L}_{n-1}^{\alpha }||_{\alpha }^{2}\sim \Gamma (n+1)\Gamma (n+\alpha +1).
\end{equation*}%
where $B_{1}(n;c)$ is the polynomial coefficient defined in \eqref{[Sec3]-CoefA0A1B0B1}.
\end{lemma}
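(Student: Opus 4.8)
The plan is to establish the exact identity first and then read off the asymptotics from it. The starting observation is that $\widehat{S}_{n}^{M,N}$ is monic of degree $n$ and, by construction, orthogonal with respect to $\langle\cdot,\cdot\rangle_{S}$ to every polynomial of degree strictly less than $n$. Since $\widehat{S}_{n}^{M,N}-\widehat{L}_{n}^{\alpha}$ has degree at most $n-1$, the second factor may be replaced by the monic Laguerre polynomial $\widehat{L}_{n}^{\alpha}$, so that
\[
||\widehat{S}_{n}^{M,N}||_{S}^{2}=\langle \widehat{S}_{n}^{M,N},\widehat{S}_{n}^{M,N}\rangle_{S}=\langle \widehat{S}_{n}^{M,N},\widehat{L}_{n}^{\alpha}\rangle_{S}.
\]

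Next I would expand the right-hand side through the definition \eqref{[Sec1]-DicrLagSob} of the Sobolev inner product, splitting it into the Laguerre part and the two mass terms:
\[
\langle \widehat{S}_{n}^{M,N},\widehat{L}_{n}^{\alpha}\rangle_{S}=\langle \widehat{S}_{n}^{M,N},\widehat{L}_{n}^{\alpha}\rangle_{\alpha}+M\widehat{S}_{n}^{M,N}(c)\widehat{L}_{n}^{\alpha}(c)+N[\widehat{S}_{n}^{M,N}]'(c)[\widehat{L}_{n}^{\alpha}]'(c).
\]
For the Laguerre part, because $\widehat{L}_{n}^{\alpha}$ is $\langle\cdot,\cdot\rangle_{\alpha}$-orthogonal to all polynomials of lower degree and $\widehat{S}_{n}^{M,N}$ is monic, only the leading term survives, giving $\langle \widehat{S}_{n}^{M,N},\widehat{L}_{n}^{\alpha}\rangle_{\alpha}=||\widehat{L}_{n}^{\alpha}||_{\alpha}^{2}$ by \eqref{[Sec2]-LagNorm}.

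The key step is then to recognize the two remaining mass terms. Comparing with the definition of $B_{1}(n;c)$ in \eqref{[Sec3]-CoefA0A1B0B1}, one sees immediately that
\[
M\widehat{S}_{n}^{M,N}(c)\widehat{L}_{n}^{\alpha}(c)+N[\widehat{S}_{n}^{M,N}]'(c)[\widehat{L}_{n}^{\alpha}]'(c)=B_{1}(n;c)\,||\widehat{L}_{n-1}^{\alpha}||_{\alpha}^{2},
\]
which yields the claimed exact formula $||\widehat{S}_{n}^{M,N}||_{S}^{2}=||\widehat{L}_{n}^{\alpha}||_{\alpha}^{2}+B_{1}(n;c)||\widehat{L}_{n-1}^{\alpha}||_{\alpha}^{2}$. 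This identification is the only genuinely delicate point, as it is precisely where the explicit form of the connection coefficient $B_{1}$ enters; everything else is bookkeeping.

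Finally, for the asymptotic equivalence I would invoke Proposition \ref{[Sec3]propA0A1B0B1}, which gives $B_{1}(n;c)\sim n$, together with \eqref{[Sec2]-LagNorm}. Since $||\widehat{L}_{n-1}^{\alpha}||_{\alpha}^{2}=\Gamma(n)\Gamma(n+\alpha)$, the second summand satisfies $B_{1}(n;c)||\widehat{L}_{n-1}^{\alpha}||_{\alpha}^{2}\sim \Gamma(n+1)\Gamma(n+\alpha)$, whose ratio to the leading term $||\widehat{L}_{n}^{\alpha}||_{\alpha}^{2}=\Gamma(n+1)\Gamma(n+\alpha+1)$ behaves like $(n+\alpha)^{-1}\to 0$. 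Hence the mass contribution is negligible and $||\widehat{S}_{n}^{M,N}||_{S}^{2}\sim \Gamma(n+1)\Gamma(n+\alpha+1)$, as required. The only care needed here is to confirm via this Gamma-quotient estimate that the lower-order term does not spoil the dominant asymptotics.
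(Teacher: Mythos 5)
Your proof is correct, but it takes a genuinely different and more direct route than the paper. The paper never pairs $\widehat{S}_{n}^{M,N}$ with $\widehat{L}_{n}^{\alpha}$ directly; instead it writes $||\widehat{S}_{n}^{M,N}||_{S}^{2}=\langle \widehat{S}_{n}^{M,N},(x-c)^{2}\widehat{\Pi}_{n-2}\rangle_{S}$ for an arbitrary monic $\widehat{\Pi}_{n-2}$, uses the symmetry \eqref{[Sec4]-SymmInn} and the identity \eqref{[Sec4]-Property2} to convert this into the plain Laguerre pairing $\langle (x-c)^{2}\widehat{S}_{n}^{M,N},\widehat{\Pi}_{n-2}\rangle_{\alpha}$, and then reads off the two surviving terms from the connection formula \eqref{[Sec3]-ConnFormS}, using that $A(n;x)$ is monic quadratic and $B(n;x)$ has leading coefficient $B_{1}(n;c)$. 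You instead exploit only the $S$-orthogonality of $\widehat{S}_{n}^{M,N}$ to lower degrees, split the inner product \eqref{[Sec1]-DicrLagSob} into its integral and mass parts, and recognize the mass contribution $M\widehat{S}_{n}^{M,N}(c)\widehat{L}_{n}^{\alpha}(c)+N[\widehat{S}_{n}^{M,N}]'(c)[\widehat{L}_{n}^{\alpha}]'(c)$ as literally $B_{1}(n;c)\,||\widehat{L}_{n-1}^{\alpha}||_{\alpha}^{2}$ from the definition in \eqref{[Sec3]-CoefA0A1B0B1}; this identification is exact, not delicate, since $B_{1}$ is defined by precisely that quotient. Your argument is shorter and needs neither the symmetry of multiplication by $(x-c)^{2}$ nor the connection formula, whereas the paper's version explains structurally why $B_{1}$ appears (as the leading coefficient of $B(n;x)$) and reuses machinery already set up for the five-term recurrence. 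The asymptotic step is handled identically in both: $B_{1}(n;c)\sim n$ from Proposition \ref{[Sec3]propA0A1B0B1} together with \eqref{[Sec2]-LagNorm} makes the second summand of order $\Gamma(n+1)\Gamma(n+\alpha)$, which is $o\bigl(\Gamma(n+1)\Gamma(n+\alpha+1)\bigr)$, so the dominant term is the Laguerre norm.
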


\begin{proof}
First, we notice that%
\begin{equation*}
||\widehat{S}_{n}^{M,N}||_{S}^{2}=\langle \widehat{S}_{n}^{M,N}(x),(x-c)^{2}%
\widehat{\Pi }_{n-2}(x)\rangle _{S},
\end{equation*}%
for every monic polynomial $\widehat{\Pi }_{n-2}$ of degree $n-2$ . From (%
\ref{[Sec4]-Property2})%
\begin{eqnarray*}
\langle \widehat{S}_{n}^{M,N}(x),(x-c)^{2}\widehat{\Pi }_{n-2}(x)\rangle
_{S} &=&\langle (x-c)^{2}\widehat{S}_{n}^{M,N}(x),\widehat{\Pi }%
_{n-2}(x)\rangle _{S} \\
&=&\langle (x-c)^{2}\widehat{S}_{n}^{M,N}(x),\widehat{\Pi }_{n-2}(x)\rangle
_{\alpha }.
\end{eqnarray*}%
Next we use the connection formula (\ref{[Sec3]-ConnFormS}). Taking into
account that $A(n;x)$ is a monic quadratic polynomial and $B(n;x)$ is a
linear polynomial with leading coefficient $B_{1}(n;c)$,%
\begin{eqnarray*}
||\widehat{S}_{n}^{M,N}||_{S}^{2} &=&\langle (x-c)^{2}\widehat{S}%
_{n}^{M,N}(x),\widehat{\Pi }_{n-2}(x)\rangle _{\alpha } \\
&=&\langle A(n;x)\widehat{L}_{n}^{\alpha }(x),\widehat{\Pi }_{n-2}(x)\rangle
_{\alpha }+\langle B(n;x)\widehat{L}_{n-1}^{\alpha }(x),\widehat{\Pi }%
_{n-2}(x)\rangle _{\alpha } \\
&=&\langle \widehat{L}_{n}^{\alpha }(x),x^{n}\rangle _{\alpha
}+B_{1}(n;c)\langle \widehat{L}_{n-1}^{\alpha }(x),x^{n-1}\rangle _{\alpha }.
\end{eqnarray*}%
The first term in the above expression is the norm of the monic Laguerre
polynomial of degree $n$ and the second one is the norm of the Laguerre
polynomial of degree $n-1$ times $B_{1}(n;c)$, which is given in (\ref%
{[Sec3]-CoefA0A1B0B1}). This means%
\begin{equation*}
||\widehat{S}_{n}^{M,N}||_{S}^{2}=||\widehat{L}_{n}^{\alpha }||_{\alpha
}^{2}+B_{1}(n;c)||\widehat{L}_{n-1}^{\alpha }||_{\alpha }^{2}.
\end{equation*}%

Using the estimates \eqref{[Sec2]-LagNorm} and Proposition \ref{[Sec3]behavA0A1B0B1}, we obtain
\begin{equation*}
||\widehat{S}_{n}^{M,N}||_{S}^{2}\sim \Gamma (n+1)\Gamma (n+\alpha +1),
\end{equation*}%
which completes the proof.
\end{proof}

We are ready to find the five-term recurrence relation satisfied by $%
\widehat{S}_{n}^{M,N}(x)$, and the asymptotic behavior of the corresponding
coefficients. Next, we will focus our attention on its proof.

Let consider the Fourier expansion of $(x-c)^{2}\widehat{S}_{n}^{M,N}(x)$ in
terms of $\{\widehat{S}_{n}^{M,N}(x)\}_{n\geq 0}$%
\begin{equation*}
(x-c)^{2}\widehat{S}_{n}^{M,N}(x)=\sum_{k=0}^{n+2}\lambda _{n,k}\widehat{S}%
_{k}^{M,N}(x),
\end{equation*}%
where%
\begin{equation}
\lambda _{n,k}=\frac{\langle (x-c)^{2}\widehat{S}_{n}^{M,N}(x),\widehat{S}%
_{k}^{M,N}(x)\rangle _{S}}{||\widehat{S}_{k}^{M,N}||_{S}^{2}},\quad
k=0,\ldots ,n+2.  \label{[Sec4]-CoefsS1}
\end{equation}%
Thus, $\lambda _{n,k}=0$ for $k=0,\ldots ,n-3$. We are dealing with monic
polynomials, so the leading coefficient $\lambda _{n,n+2} =1$.

To obtain $\lambda _{n,n+1}$, we use the connection formula (\ref%
{[Sec3]-ConnFormS}), with coefficients $A(n;x)$ and $B(n;x)$ as in (\ref%
{[Sec3]-ABxn}). Thus,%
\begin{eqnarray*}
\lambda _{n,n+1}
&=&\frac{1}{||\widehat{S}_{n+1}^{M,N}||_{S}^{2}}\langle A(n;x)\widehat{L}%
_{n}^{\alpha }(x),\widehat{S}_{n+1}^{M,N}(x)\rangle _{S}+\frac{1}{||\widehat{%
S}_{n+1}^{M,N}||_{S}^{2}}\langle B(n;x)\widehat{L}_{n-1}^{\alpha }(x),%
\widehat{S}_{n+1}^{M,N}(x)\rangle _{S} \\
&=&\frac{1}{||\widehat{S}_{n+1}^{M,N}||_{S}^{2}}\langle (x-c)^{2}\widehat{L}%
_{n}^{\alpha }(x),\widehat{S}_{n+1}^{M,N}(x)\rangle _{S}+A_{1}(n;c).
\end{eqnarray*}%
Let us study the discrete Sobolev inner product $\langle (x-c)^{2}\widehat{L}%
_{n}^{\alpha }(x),\widehat{S}_{n+1}^{M,N}(x)\rangle _{S}$ above. Applying (%
\ref{[Sec4]-SymmInn}), (\ref{[Sec4]-Property2}), (\ref{[Sec2]-LagNorm}) and
Lemma \ref{[Sec4]-LEMMA-41}, we obtain%
\begin{eqnarray*}
\langle (x-c)^{2}\widehat{L}_{n}^{\alpha }(x),\widehat{S}_{n+1}^{M,N}(x)%
\rangle _{S} &=&\langle \widehat{L}_{n}^{\alpha }(x),(x-c)^{2}\widehat{S}%
_{n+1}^{M,N}(x)\rangle _{\alpha } \\
&=&\tilde{d}_{n+1}\,||\widehat{L}_{n}^{\alpha }||_{\alpha }^{2}.
\end{eqnarray*}%
From (\ref{[Sec2]-CocGammas}), Lemma \ref{[Sec4]-LEMMA-42} and Proposition \ref{[Sec3]propA0A1B0B1}

\begin{equation*}
\lambda _{n,n+1}=\frac{\tilde{d}_{n+1}\,||\widehat{L}_{n}^{\alpha
}||_{\alpha }^{2}+A_{1}(n;c)}{||\widehat{S}_{n+1}^{M,N}||_{S}^{2}}\sim 4n.
\end{equation*}

In order to compute $\lambda _{n,n}$, from (\ref{[Sec3]-ConnFormS}) and (\ref%
{[Sec3]-ABxn}) we get%
\begin{eqnarray*}
\lambda _{n,n} &=&\frac{\langle (x-c)^{2}\widehat{L}_{n}^{\alpha }(x),%
\widehat{S}_{n}^{M,N}(x)\rangle _{S}}{||\widehat{S}_{n}^{M,N}||_{S}^{2}}%
+A_{1}(n;c)\frac{\langle (x-c)\widehat{L}_{n}^{\alpha }(x),\widehat{S}%
_{n}^{M,N}(x)\rangle _{S}}{||\widehat{S}_{n}^{M,N}||_{S}^{2}} \\
&&+A_{0}(n;c)+B_{1}(n;c).
\end{eqnarray*}%
But, according to (\ref{[Sec4]-SymmInn}), (\ref{[Sec4]-Property2}) and Lemma %
\ref{[Sec4]-LEMMA-41}, the first term is%
\begin{equation*}
\frac{\langle (x-c)^{2}\widehat{L}_{n}^{\alpha }(x),\widehat{S}%
_{n}^{M,N}(x)\rangle _{S}}{||\widehat{S}_{n}^{M,N}||_{S}^{2}}=\tilde{c}_{n}%
\frac{||\widehat{L}_{n}^{\alpha }||_{\alpha }^{2}}{||\widehat{S}%
_{n}^{M,N}||_{S}^{2}}.
\end{equation*}%
After some algebraic manipulations, from (\ref{[Sec2]-3TRRLag}) we get%
\begin{equation*}
(x-c)\widehat{L}_{n}^{\alpha }(x)=(x-c)^{2}\widehat{L}_{n-1}^{\alpha
}(x)-(\beta _{n-1}-c)(x-c)\widehat{L}_{n-1}^{\alpha }(x)-\gamma _{n-1}(x-c)%
\widehat{L}_{n-2}^{\alpha }(x),
\end{equation*}%
Using this expression, we obtain%
\begin{eqnarray*}
\frac{\langle (x-c)\widehat{L}_{n}^{\alpha }(x),\widehat{S}%
_{n}^{M,N}(x)\rangle _{S}}{||\widehat{S}_{n}^{M,N}||_{S}^{2}} &=&\frac{%
\langle \widehat{L}_{n-1}^{\alpha }(x),(x-c)^{2}\widehat{S}%
_{n}^{M,N}(x)\rangle _{\alpha }}{||\widehat{S}_{n}^{M,N}||_{S}^{2}}-(\beta
_{n-1}-c) \\
&=&\tilde{d}_{n}\frac{||\widehat{L}_{n-1}^{\alpha }||_{\alpha }^{2}}{||%
\widehat{S}_{n}^{M,N}||_{S}^{2}}-(\beta _{n-1}-c).
\end{eqnarray*}%
As a consequence, we get%
\begin{eqnarray*}
\lambda _{n,n} &=&\frac{\tilde{c}_{n}\,||\widehat{L}_{n}^{\alpha }||_{\alpha
}^{2}+\tilde{d}_{n}\,||\widehat{L}_{n-1}^{\alpha }||_{\alpha }^{2}+(\beta
_{n-1}-c)+A_{0}(n;c)+B_{1}(n;c)}{||\widehat{S}_{n}^{M,N}||_{S}^{2}} \\
&\sim &6n^{2}.
\end{eqnarray*}

A similar analysis yields%
\begin{eqnarray*}
\lambda _{n,n-1} &=&\frac{\tilde{d}_{n}\,||\widehat{L}_{n-1}^{\alpha
}||_{\alpha }^{2}+A_{1}(n-1;c)||\widehat{S}_{n}^{M,N}||_{S}^{2}}{||\widehat{S%
}_{n-1}^{M,N}||_{S}^{2}}\sim 4n^{3}, \\
\lambda _{n,n-2} &=&\frac{||\widehat{S}_{n}^{M,N}||_{S}^{2}}{||\widehat{S}%
_{n-2}^{M,N}||_{S}^{2}}\sim n^{4}.
\end{eqnarray*}

We can summarize the results of this Section in the following theorem.

\begin{theorem}[Five-term recurrence relation]
For every $n\geq 1$, $\alpha >-1$ and $c\in \mathbb{R}_{+}$, the monic
Laguerre-Sobolev type polynomials $\{\widehat{S}_{n}^{M,N}\}_{n\geq 0}$,
orthogonal with respect to (\ref{[Sec1]-DicrLagSob}) satisfy the following
five-term recurrence relation%
\begin{equation*}
(x-c)^{2}\widehat{S}_{n}^{M,N}(x)=
\end{equation*}%
\begin{equation*}
\widehat{S}_{n+2}^{M,N}(x)+\lambda _{n,n+1}\widehat{S}_{n+1}^{M,N}(x)+%
\lambda _{n,n}\widehat{S}_{n}^{M,N}(x)+\lambda _{n,n-1}\widehat{S}%
_{n-1}^{M,N}(x)+\lambda _{n,n-2}\widehat{S}_{n-2}^{M,N}(x),
\end{equation*}%
with%
\begin{equation*}
\lambda _{n,n+1}=\frac{\tilde{d}_{n+1}\,||\widehat{L}_{n}^{\alpha
}||_{\alpha }^{2}+A_{1}(n;c)}{||\widehat{S}_{n+1}^{M,N}||_{S}^{2}}\sim
4n,
\end{equation*}%
\begin{equation*}
\lambda _{n,n}=\frac{\tilde{c}_{n}\,||\widehat{L}_{n}^{\alpha }||_{\alpha
}^{2}+\tilde{d}_{n}\,||\widehat{L}_{n-1}^{\alpha }||_{\alpha }^{2}-(\beta
_{n-1}-c)+A_{0}(n;c)+B_{1}(n;c)}{||\widehat{S}_{n}^{M,N}||_{S}^{2}}\sim
6n^{2},
\end{equation*}%
\begin{equation*}
\lambda _{n,n-1}=\frac{\tilde{d}_{n}\,||\widehat{L}_{n-1}^{\alpha
}||_{\alpha }^{2}+A_{1}(n-1;c)||\widehat{S}_{n}^{M,N}||_{S}^{2}}{||\widehat{S%
}_{n-1}^{M,N}||_{S}^{2}}\sim 4n^{3},
\end{equation*}%
\begin{equation*}
\lambda _{n,n-2}=\frac{||\widehat{S}_{n}^{M,N}||_{S}^{2}}{||\widehat{S}%
_{n-2}^{M,N}||_{S}^{2}}\sim n^{4}.
\end{equation*}
\end{theorem}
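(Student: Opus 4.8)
The plan is to exploit the symmetry of multiplication by $(x-c)^2$ recorded in (\ref{[Sec4]-SymmInn}) to show that the Fourier expansion of $(x-c)^2\widehat{S}_n^{M,N}$ in the orthogonal basis $\{\widehat{S}_k^{M,N}\}_{k\ge 0}$ collapses to only five nonzero terms, and then to evaluate each surviving coefficient $\lambda_{n,k}$ defined in (\ref{[Sec4]-CoefsS1}) by transporting the inner product to the classical Laguerre inner product. First I would write $(x-c)^2\widehat{S}_n^{M,N}(x)=\sum_{k=0}^{n+2}\lambda_{n,k}\widehat{S}_k^{M,N}(x)$ and observe that, by (\ref{[Sec4]-SymmInn}), $\lambda_{n,k}\,\|\widehat{S}_k^{M,N}\|_S^2=\langle\widehat{S}_n^{M,N},(x-c)^2\widehat{S}_k^{M,N}\rangle_S$. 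Since $(x-c)^2\widehat{S}_k^{M,N}$ has degree $k+2$, orthogonality forces $\lambda_{n,k}=0$ whenever $k\le n-3$, leaving exactly the indices $k=n-2,\dots,n+2$. Monicity of both $(x-c)^2$ and the polynomials $\widehat{S}_k^{M,N}$ then gives the leading coefficient $\lambda_{n,n+2}=1$ at once.

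Next I would compute the four remaining coefficients one at a time, in each case substituting the connection formula (\ref{[Sec3]-ConnFormS})--(\ref{[Sec3]-ABxn}) for $(x-c)^2\widehat{S}_n^{M,N}$ and using (\ref{[Sec4]-Property2}) to replace $\langle\cdot,\cdot\rangle_S$ by the Laguerre product $\langle\cdot,\cdot\rangle_\alpha$. The mechanism is that, once $(x-c)^2\widehat{S}_n^{M,N}$ is expanded in the monic Laguerre basis through Lemma \ref{[Sec4]-LEMMA-41}, pairing against $\widehat{S}_k^{M,N}$ (or against a convenient $\widehat{L}_j^\alpha$) isolates a single Laguerre norm via (\ref{[Sec2]-LagNorm}). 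For $\lambda_{n,n+1}$ and $\lambda_{n,n-1}$ the extra additive contribution $A_1$ is produced by the linear part of $A(n;x)$, while for $\lambda_{n,n}$ one must in addition rewrite $(x-c)\widehat{L}_n^\alpha$ through the three-term recurrence (\ref{[Sec2]-3TRRLag}) so that the surviving pairing again reduces to Laguerre norms together with the lower-order terms $A_0$ and $B_1$. The coefficient $\lambda_{n,n-2}$ is simply the ratio of consecutive Sobolev norms $\|\widehat{S}_n^{M,N}\|_S^2/\|\widehat{S}_{n-2}^{M,N}\|_S^2$.

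Finally, I would feed the norm asymptotics of Lemma \ref{[Sec4]-LEMMA-42}, the explicit Laguerre norm (\ref{[Sec2]-LagNorm}), the Gamma-quotient estimate (\ref{[Sec2]-CocGammas}), and the coefficient estimates of Proposition \ref{[Sec3]propA0A1B0B1} into the closed forms of the $\lambda_{n,k}$ to obtain $\lambda_{n,n+1}\sim 4n$, $\lambda_{n,n}\sim 6n^2$, $\lambda_{n,n-1}\sim 4n^3$ and $\lambda_{n,n-2}\sim n^4$. The main obstacle I anticipate is the bookkeeping in $\lambda_{n,n}$ and $\lambda_{n,n-1}$: several terms of a priori different growth orders appear, and one must verify both that the $\tilde{c}_n,\tilde{d}_n$ contributions from Lemma \ref{[Sec4]-LEMMA-41} dominate and that the oscillatory, non-convergent pieces of $A_0$ and $B_0$ (flagged in Proposition \ref{[Sec3]propA0A1B0B1}) enter only at subleading order, so that the stated single-power asymptotics survive. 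Matching the claimed growth $n^{4-j}$ for $\lambda_{n,n+2-j}$ against the fourth-order vanishing structure forced by multiplication by $(x-c)^2$ provides a convenient consistency check throughout.
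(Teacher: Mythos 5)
Your proposal follows essentially the same route as the paper: the symmetry of multiplication by $(x-c)^2$ with respect to $\langle\cdot,\cdot\rangle_S$ to truncate the Fourier expansion to five terms, the connection formula (\ref{[Sec3]-ConnFormS}) together with (\ref{[Sec4]-Property2}) and Lemma \ref{[Sec4]-LEMMA-41} to evaluate each $\lambda_{n,k}$ as a combination of Laguerre norms (including the three-term recurrence trick for $\lambda_{n,n}$), and then Lemma \ref{[Sec4]-LEMMA-42}, (\ref{[Sec2]-LagNorm}), (\ref{[Sec2]-CocGammas}) and Proposition \ref{[Sec3]propA0A1B0B1} for the asymptotics. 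The argument is correct and matches the paper's proof in structure and in all key ingredients.
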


\section*{Acknowledgements}
The authors thank the reviewers for their careful revision of the manuscript. Their
helpful comments and suggestions contributed to improve substantially style and presentation of the manuscript.


\end{document}